\definecolor{darkgreen}{rgb}{0,0.45,0}
\theoremstyle{plain}
\newtheorem{thm}{Theorem}
\newtheorem*{thm*}{Theorem}
\newtheorem{prop}{Proposition}
\newtheorem{lem}{Lemma}
\newtheorem*{prop*}{Proposition}
\newtheorem{cor}{Corollary}
\theoremstyle{definition}
\newtheorem{defn}{Definition}
\newtheorem{example}{Example}
\theoremstyle{remark}
\newtheorem{rmk}{Remark}
\theoremstyle{plain} 
\newcommand{\thistheoremname}{}
\newtheorem*{genericthm*}{\thistheoremname}
\newenvironment{namedthm*}[1]
  {\renewcommand{\thistheoremname}{#1}%
   \begin{genericthm*}}
  {\end{genericthm*}}
\title[]{Categorical Lyapunov Theory I: 
\\Stability of Flows
}
\date{\today}
\author[Ames]{Aaron D. Ames}
\author[Moeller]{Joe Moeller}
\author[Tabuada]{Paulo Tabuada}
\address{California Institute of Technology, Pasadena, CA, USA}
\address{University of California, Los Angeles, Los Angeles, CA, USA}
\email{ames@caltech.edu,
jmoeller@caltech.edu,
tabuada@ee.ucla.edu}
\newcommand{\flow}{\phi}
\newcommand{\F}{\mathcal F}
\newcommand{\D}{\mathrm D}
\newcommand{\ex}[1]{\hspace*{\fill}\textcolor{gray}{#1}}
\newcommand{\exmath}[1]{\tag*{\textcolor{gray}{$#1$}}}
\newcommand{\excenter}[1]{\textcolor{gray}{#1}}
\newcommand{\R}{\mathbb R}
\newcommand{\Rplus}{\R_{\geq 0}}
\newcommand{\Rplusinfty}{\Rplus^{\infty}}
\newcommand{\Z}{\mathbb Z}
\newcommand{\Zplus}{{\Z_{\geq 0}}}
\newcommand{\Q}{\mathcal Q}
\newcommand{\V}{\mathcal V}
\newcommand{\VCat}{\V\Cat}
\renewcommand{\P}{\category P}
\newcommand{\id}{\mathrm{id}}
\newcommand{\inv}{^{-1}}
\newcommand{\norm}{\|\cdot\|_{x^*}}
\newcommand{\xnorm}[1]{\|#1\|_{x^*}}
\newcommand{\define}[1]{{\bf \boldmath{#1}}}
\newcommand{\maps}{\colon}
\newcommand{\To}{\Rightarrow}
\newcommand{\Ob}{\mathrm{Ob}}
\newcommand{\op}{^\mathrm{op}}
\newcommand{\disc}{\mathsf{disc}}
\newcommand{\T}{\mathrm{T}}
\newcommand{\K}{\mathcal K}
\newcommand{\category}[1]{\mathsf{#1}}
\newcommand{\C}{\category C}
\newcommand{\E}{\category E}
\newcommand{\namedcat}[1]{\mathsf{#1}}
\newcommand{\TFlow}{T\mhyphen\Flow}
\newcommand{\Cat}{\namedcat{Cat}}
\newcommand{\LMet}{\namedcat{LMet}}
\newcommand{\Man}{\namedcat{Man}}
\newcommand{\Set}{\namedcat{Set}}
\newcommand{\Flow}{\namedcat{Flow}}
\newcommand{\Top}{\namedcat{Top}}
\newcommand{\VCatgen}{\VCat_{L}}
\mathchardef\mhyphen="2D
\definecolor {processblue}{cmyk}{0.9,0.5,0,0}
\tikzstyle{simple}=[-,line width=2.000]
\tikzstyle{arrow}=[-,postaction={decorate},decoration={markings,mark=at position .5 with {\arrow{>}}},line width=1.100]
\tikzstyle{none}=[inner sep=-1pt]
\tikzstyle{species}=[circle,fill=none,draw=black,scale=1.0]
\tikzstyle{transition}=[rectangle,fill=none,draw=black,scale=1.15]
\tikzstyle{empty}=[circle,fill=none, draw=none]
\tikzstyle{inputdot}=[circle,fill=black,draw=black, scale=.5]
\tikzstyle{dot}=[circle,fill=black,draw=black]
\tikzstyle{bounding}=[circle,dashed, fill=none,draw=black, scale=9.00]
\tikzstyle{simple}=[-,draw=black,line width=1.000]
\tikzstyle{inarrow}=[-,draw=black,postaction={decorate},decoration={markings,mark=at position .5 with {\arrow{>}}},line width=1.000]
\tikzstyle{tick}=[-,draw=black,postaction={decorate},decoration={markings,mark=at position .5 with {\draw (0,-0.1) -- (0,0.1);}},line width=1.000]
\tikzstyle{inputarrow}=[->,draw=black, shorten >=.05cm]
\tikzset{main node/.style={circle,fill=blue!20,draw,minimum size=1cm,inner sep=0pt},}
\tikzstyle{construct}=[fill=white, draw=black, shape=circle]
\tikzstyle{universal}=[fill=black, draw=black, shape=circle]
\begin{document}

\begin{abstract}
    Lyapunov's theorem provides a fundamental characterization of the stability of dynamical systems. This paper presents a categorical framework for Lyapunov theory, generalizing stability analysis with Lyapunov functions categorically. Core to our approach is the set of axioms underlying a \emph{setting for stability}, which give the necessary ingredients for ``doing Lyapunov theory'' in a category of interest. With these minimal assumptions, we define the stability of equilibria, formulate \emph{Lyapunov morphisms}, and demonstrate that the existence of Lyapunov morphisms is necessary and sufficient for establishing the stability of flows. To illustrate these constructions, we show how classical notions of stability, e.g., for continuous and discrete time dynamical systems, are captured by this categorical framework for Lyapunov theory. Finally, to demonstrate the extensibility of our framework, we illustrate how enriched categories, e.g., Lawvere metric spaces, yield settings for stability enabling one to ``do Lyapunov theory'' in enriched categories. 
\end{abstract}

\maketitle
\setcounter{tocdepth}{1} 
\tableofcontents

\section{Introduction}
\label{sec:intro}

Lyapunov theory completely characterizes the stability of dynamical systems. These elegant conditions for establishing stability have proven to be the single most powerful method for checking the stability of nonlinear systems. For example, Lyapunov analysis is used to prove the stability of model predictive control (MPC) \cite{rawlings2017model}. It has lead to a wide-variety of nonlinear control methods, e.g., control Lyapunov functions (CLFs) for stabilization \cite{sontag1989universal}. More generally, there is a wide range of ``Lypaunov-like'' functions for establishing a broader range of properties, e.g., control barrier functions (CBFs) for safety (forward set invariance) \cite{ames2016control}. The prevalence of these conditions seems to imply a common framework for ``Lyapunov-like'' conditions establishing stability in an abstract sense. 

The goal of this work is to generalize Lyapunov-like constructions under a single universal framework. Inspired by the work of Quillen \cite{quillen2006homotopical}, which provided axioms on a category that allows one to ``do homotopy theory," we seek to axiomatize Lyapunov theory categorically. Category theory provides an ideal setting in which to find the commonality between superficially distinct logical frameworks. For example, \cite{haghverdi2005bisimulation} provides a categorical framework for bisimulation relations that enables their application to  disparate application domains. Inspired by these examples we formulate the notion of a \emph{setting for stability} that allows one to ``do Lyapunov theory'' in a suitable category, and demonstrate that this captures classical Lyapunov theory for stability. 

\subsection{Motivation: Classical Stability Revisited}
\label{sec:motivation}

We begin by revisiting classical stability results with a view toward motivating the categorical constructions that will be presented in the paper. 

Consider a dynamical system described by an ordinary differential equation: 
\[
\dot{x} = f(x) 
\]
where $f \maps E \subseteq \R^n \to \R^n$. A (forward complete) solution $c \maps \Rplus \to E$ satisfies the above ODE: 
\begin{eqnarray}
\label{eqn:odesolution}
    \dot{c}(t) = f(c(t)).
\end{eqnarray}
For the purpose of aligning the notions of systems theory with the language and practice of category theory, we can express the equation above as a commutative diagram:
\begin{eqnarray}
\label{eqn:solutiondiagram}
\begin{tikzcd}
    \Rplus\times \R 
    \ar[r, "Tc"]
    &
    E \times \R^n 
    \\
    \Rplus 
    \ar[r, "c"'] 
    \ar[u, "\vec{1} \coloneq (\id_{\Rplus}{,}1)"]
    &
    E   
    \ar[u, "\vec{f}\coloneq(\id_{E}{,}f)"']
    \ar[ul, phantom, "="]
\end{tikzcd}
\end{eqnarray}
where $\vec{1} = (\id_{\Rplus},1)$ is the vector field representing the unit clock: $\dot{t} = 1$ and $\vec{f}$ is the vector field associated with the ODE $\vec{f} = (\id_{E},f)$. The diagram commutes if: 
\[
Tc \circ \vec{1} = Tc(t,1) = (c(t) ,\dot{c}(t) 1) = (c(t),f(c(t))) = \vec{f} \circ c, 
\]
that is, if \eqref{eqn:odesolution} is satisfied. Flows $\flow \maps \Rplus \times E \to E$ are the collections of solutions parameterized by initial condition: $\flow_t(x_0) = c(t)$ for $c(0) = x_0$. 

We are fundamentally interested in studying the properties of equilibrium points, those points $x^* \in E$ which remain fixed under the dynamics of the system. An equilibrium point is \emph{stable} if a solution that starts near the point stays near the point for all time. Formally, for all $\epsilon > 0$, there exists a $\delta > 0$ such that for any solution curve $c \maps \Rplus \to E$
\begin{eqnarray}
\label{eqn:stabilitydef}
    \| c(0)-x^* \| \leq \delta \quad \Rightarrow \quad \|c(t)-x^*\| \leq \epsilon 
    \quad \forall t \in \Rplus.
\end{eqnarray}
Stability can be equivalently reformulated in a modern form: an equilibrium point $x^* \in E$ is stable if 
\begin{eqnarray}
\label{eqn:stabilityalphaintro}
    \| c(t) \|_{x^*}    \leq    \alpha(\|c(0)\|_{x^*}). 
\end{eqnarray}
Here $\| x \|_{x^*} \coloneq \| x - x^* \|$, and $\alpha \maps \Rplus \to \Rplus$ is a \emph{class $\K$ function}---continuous, strictly increasing, and $\alpha(0) = 0$. This implies $\alpha$ admits a partial inverse and we recover the classical notion of stability by picking $\delta = \alpha^{-1}(\epsilon)$. Importantly, the class $\K$ formulation of stability facilitates generalizations, e.g., to forward set invariance via barrier functions \cite{ames2016control}. We leverage this formulation of stability to express stability through diagrams that \emph{commute up to inequality}.
\begin{eqnarray}
\label{eqn:stabilityintro}
\begin{tikzcd}
    \R  
    \arrow[rr, "c"]
    \arrow[d, "c(0)"']
    &&
    E
    \arrow[d, "\|\cdot \|_{x^*}"]
    \\
     E
    \arrow[r, "\|\cdot\|_{x^*}"']
    \ar[urr, phantom, "\geq"]
     &
    \Rplus
    \arrow[r, "\alpha"']
    &
    \Rplus
\end{tikzcd}
\end{eqnarray}
Saying that the ``diagram commutes up to inequality'' simply implies that \eqref{eqn:stabilityalphaintro} is satisfied. For ``commuting up to inequality'' to be sensible categorically, one must formalize this notion in the context of lax commuting diagrams.

Lyapunov's method \cite{lyapunov1892general}\footnote{See \cite{lyapunov1992general} for a translation. See \cite{NonlinearSystems} for a modern treatment.} provides a complete characterization of stability through conditions that can be checked directly on the ODE. 
A \define{Lyapunov function} is a continuously differentiable function $V \maps E \to \R$, satisfying the following conditions for all $x \in E$: 
\begin{eqnarray}
\label{eqn:lyapunov_condition}
\textit{Positive Definite:} \qquad V(x) & \geq   & 0 \nonumber\\
\qquad V(x) & =   & 0  \quad  \mathrm{iff} \quad x = x^* \\
\textit{Decrescent:}  \qquad \dot{V}(x) & = &  \frac{\partial V}{\partial x}\Big \vert_{x} f(x)  \leq 0 . \nonumber
\end{eqnarray}
The existence of a Lyapunov function implies stability of the equilibrium point $x^*$.
Conversely, every system with a stable equilibrium point admits a Lyapunov function \cite{Massera} (cf. \cite{MattenetJungers} which establishes this using category theory). 

This paper builds on the key observation: \emph{Lyapunov functions can be viewed as a map from the dynamical system to the ``simplest'' stable system} \cite{ames2006stability}. We address this first in \cref{sec:catlyap} for flows and then in the sequel \cite{CLT2} for systems with dynamics, but we begin by motivating this observation. Informally, the simplest system that is stable is $\dot{y} = 0$ for $y \in \Rplus$, and a Lyapunov function can be expressed by a positive definite function $V \maps E \to \Rplus$ such that the following diagram commutes up to inequality: 
\begin{eqnarray}
\label{eqn:introclassiclyap}
\begin{tikzcd}
    E \times \R^n 
    \ar[r, "\T V"]
    &
    \Rplus \times \R 
    \\
    E   
    \ar[r, "V"'] 
    \ar[u, "\vec{f}\coloneq(\id_{E}{,}f)"]
    &
    \Rplus
    \ar[ul, phantom, "\leq"]
    \ar[u, "\vec{0} = (\id_{\Rplus}{,}0)"']
\end{tikzcd}
\end{eqnarray}
where $\T V(x,y) = \left( V(x),\frac{\partial V}{\partial x}  \vert_{x} y \right)$. 
The diagram commuting up to inequality yields: 
\begin{eqnarray}
    \T V \circ \vec{f} \leq \vec{0} \circ V & \iff &  \T V(x,f(x)) \leq (V(x),0) \nonumber\\
    \label{eqn:Lyapupconditions}
    & \iff &  \frac{\partial V}{\partial x} \Big \vert_{x} f(x) \leq 0 \\
    & \iff  & \dot{V}(x) \leq 0 \nonumber
\end{eqnarray}
thus recovering \eqref{eqn:lyapunov_condition}. 
Invoking the comparison lemma \cite[Lemma 3.4]{NonlinearSystems} we conclude that $V(c(t)) \leq V(c(0))$, which can be expressed in diagram form as follows.
\begin{eqnarray}
\label{eqn:stabilitydiagrammotivation}
\begin{tikzcd}
    \Rplus
    \arrow[r, "c"]
    \arrow[d, "c(0)"']
    &
    E
    \arrow[d, "V"]
    \\
    E
    \arrow[r, "V"']
    &
    \Rplus
    \ar[ul, phantom, "\geq"]
\end{tikzcd}
\end{eqnarray}
We thus obtain a description of Lypaunov functions in terms of categorical diagrams. 

\subsection{Overview: Categorical Approach to Stability}

In order to provide a categorical framework for Lyapunov stability, we begin by distilling Lyapunov theory to its most basic elements. That is, before considering Lyapunov functions on systems with dynamics, we instead begin with formulating Lyapunov functions as they act on collections of solutions, i.e., flows. The result is a set of axioms for a \define{setting for stability} in an ambient category $\C$, with key components: 
\begin{itemize}
    \item a \emph{time object} $T$
    \item an \emph{object of interest} $E$
    \item a \emph{measurement object} $R$
\end{itemize}
Here, $T$ is a time object (equipped with the structure of a monoid), $E$ is the space of interest represented as an object in $\C$, and $R$ is a simple stable object where measurements can be compared. The axioms assert a partial order on the hom-set $\C(E,R)$ which allows for morphisms to be compared via morphisms between morphisms, $\To$, and hence enables diagrams that lax commute, i.e., ``commute up to inequality.'' Through this higher categorical structure ($\C$ can equivalently be viewed as a 2-category, with $R$ a posetal object), distances can be formulated and this leads to a notion of ``norm'' $\| \cdot \|_{x^*}$ that is positive and point separating. Therefore, a setting for stability has the necessary ingredients to ``do Lyapunov theory.''

We begin by generalizing stability---as represented diagrammatically in \eqref{eqn:stabilityintro}---to settings for stability. This is done at the level of flows: $\flow \maps T \times E \to E$, with $\flow$ an action of the monoid $T$ on $E$. An equilibrium point $x^* \maps 1 \to E$, with $1$ the terminal object of $\C$, is simply a fixed point of the flow as represented by a commuting diagram. Importantly, this formulation does not require $E$ to have the underlying structure of a set, i.e., the ambient category $\C$ need not be concrete. 
A stable equilibrium point $x^* \maps 1 \to E$ can then be defined as the following diagram lax commuting: 
\[
\begin{tikzcd}
    T \times E 
    \arrow[rr, "\flow"]
    \arrow[d, "\pi"']
    &&
    E
    \arrow[d, "\|\cdot\|_{x^*}"]
    \\
    E
    \arrow[r, "\|\cdot\|_{x^*}"']
    \arrow[urr, Rightarrow, ""]
    &
    R
    \arrow[r, "\alpha"']
    &
    R
    \end{tikzcd}
\]
with $\alpha$ a \emph{class $\K$ morphism}---order preserving and distinguished point preserving, with inverse. This, therefore, generalizes \eqref{eqn:stabilityalphaintro} to a categorical context. 

The key generalization of Lyapunov theory is that of a \define{Lyapunov morphism}: $V \maps E \to R$ operating in a setting for stability. As with classic Lyapunov functions \eqref{eqn:lyapunov_condition}, this must be positive definite and decresent. These conditions can be expressed via lax commuting diagrams: 
\[
\begin{array}{c}
\textit{Positive Definite:} \\
\begin{tikzcd}
    &
    R
    \arrow[dr, "\overline \alpha"]
    \\
    E
    \arrow[ur, "\|\cdot\|_{x^*}"]
    \arrow[rr, "V"{name = V}, ""'{name = B}]
    \arrow[dr, "\|\cdot\|_{x^*}"']
    &&
    R
    \\&
    R
    \arrow[ur, "\underline \alpha"']
    \arrow[from = 1-2, to = V, Rightarrow, ""]
    \arrow[from = B, to = 3-2, Rightarrow, ""]
\end{tikzcd}
\end{array}
\qquad \qquad 
\begin{array}{c}
\textit{Decresent:} \\
\begin{tikzcd}
    T \times E
    \arrow[r, "\flow"]
    \arrow[d, "\pi"']
    &
    E
    \arrow[d, "V"]
    \\
    E
    \arrow[r, "V"']
    \arrow[ur, Rightarrow]
    &
    R
\end{tikzcd}
\end{array}
\]
for $\overline{\alpha}$ and $\underline{\alpha}$ class $\K$ morphisms. The positive definite condition implies the positivity of $V$, $V \To 0$, along with point separating: $\ker V \cong 1 \xrightarrow{x^*} E$. The decresent condition generalizes the diagram \eqref{eqn:stabilitydiagrammotivation} representing Lyapunov functions in the classic setting. We establish that, in a setting for stability, the existence of a Lyapunov morphism implies stability of flows. Conversely, we prove that stable flows imply the existence of a Lyapunov morphism---thereby showing Lyapunov morphisms are necessary and sufficient for stability, and that the axioms underlying a setting for stability characterize stability. This is illustrated on a range of examples, from classic stability for continuous and discrete time systems, to enriched categories. 

\subsection{Next Steps: A View Toward Systems}

There is something missing from the Lyapunov approach to stability based purely on flows. In the classical setting, the flow of a system is obtained by solving the differential equation from all initial conditions. If this were possible, there would be no need to use Lyapunov functions as certificates of stability---one could simply directly check the flow for stability. Indeed, one could say that the entire point of Lyapunov's theorem is to give a characterization of stable equilibrium points in terms of the pointwise behavior of the system encoded by its dynamics. That is, Lyapunov's theorem connects the local instantaneous behavior of a system---the rate of change of the Lyapunov function, cf. \eqref{eqn:lyapunov_condition}---to the stability behavior of its flows---staying within a bounded neighborhood of the equilibrium point, cf. \eqref{eqn:stabilitydef}. 

We resolve this disconnect in \cite{CLT2}, where we relate the global flow perspective to the dynamics of systems. Specifically, given a functor $\F \maps \C \to \C$, we view systems as $\F$-coalgebras, i.e., morphisms $\vec f \maps E \to \F(E)$. Given a setting for stability, we present additional axioms that yield a \emph{setting for dynamic stability} and, in this setting, obtain the following Lyapunov characterization of stability: 

\begin{thm*}[Generalized Lyapunov's theorem, \cite{CLT2}]
    Let $\vec f \maps E \to \F(E)$ be an $\F$-coalgebra in a setting for dynamic stability. An equilibrium point $x^* \maps 1 \to E$ of $\vec f$ is stable if and only if there exists a morphism $V \maps E \to R$ such that 
    \begin{enumerate}
        \item $V$ is positive definite with respect to $x^*$, and
        \item the following diagram lax commutes.
        \[\begin{tikzcd}
            \F(E)
            \arrow[r, "\F(V)"]
            &
            \F(R)
            \\
            E
            \arrow[u, "\vec f"] 
            \arrow[r, "V"']
            &
            R
            \arrow[ul, Rightarrow]
            \arrow[u, "\vec{0}_R"']
        \end{tikzcd}\]
    \end{enumerate}
\end{thm*}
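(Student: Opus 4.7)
The plan is to reduce the theorem to the flow-based Lyapunov theorem established earlier in the paper. A setting for dynamic stability must provide a bridge between $\F$-coalgebras and flows, i.e., a construction $\vec f \mapsto \flow^{\vec f}$ that produces the flow generated by a coalgebra, together with compatibility axioms ensuring that the trivial coalgebra $\vec{0}_R \maps R \to \F(R)$ generates the identity flow on $R$. Once this bridge is in place, a Lyapunov morphism $V \maps E \to R$ with respect to $\vec f$ can be translated into a Lyapunov morphism with respect to $\flow^{\vec f}$ and vice versa, at which point the flow-based theorem closes the argument.

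For the sufficiency direction, assume $V$ satisfies (1) and (2). I would argue that $V$ is a Lyapunov morphism for the induced flow $\flow^{\vec f}$: positive definiteness is immediate from (1), and the decrescent condition on flows is obtained by integrating the infinitesimal inequality $\F(V) \circ \vec f \leq \vec{0}_R \circ V$ to the global inequality $V \circ \flow^{\vec f}_t \leq V$ for every $t \in T$. This is the categorical analog of the comparison lemma and should be a core result in the setting for dynamic stability. Applying the flow-based Lyapunov theorem then yields stability of $x^*$.

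For the necessity direction, stability of $x^*$ with respect to $\flow^{\vec f}$ produces, via the flow-based converse Lyapunov theorem, a morphism $V \maps E \to R$ that is positive definite and decrescent along the flow. It remains to verify condition (2). I would obtain this by differentiating the global decrescent condition along the flow: the fact that $V \circ \flow^{\vec f}_t \leq V$ for all $t$, combined with functoriality of $\F$ and the compatibility of $\vec{0}_R$ with the trivial flow, forces the infinitesimal lax-square $\F(V) \circ \vec f \leq \vec{0}_R \circ V$ to commute. This is the comparison lemma read in reverse.

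The main obstacle in both directions is precisely this passage between the infinitesimal inequality (on $\F$-coalgebras) and the global inequality (on flows). The forward step requires a monotone integration principle of Gr\"onwall type, and the reverse step requires that monotone decrease globally is witnessed by a non-positive infinitesimal rate. Neither is automatic from the axioms of a setting for stability alone; they must be encoded in the additional structure distinguishing a \emph{setting for dynamic stability}, likely through axioms relating $\F$, the monoid action $\flow$, and the lax 2-categorical structure on $\C$. Once these axioms are in place, the proof proper reduces to diagram-chasing and an invocation of the flow-based theorem.
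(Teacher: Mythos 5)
First, note that this paper does not actually prove the Generalized Lyapunov theorem: it is stated in the introduction only as a preview of the sequel \cite{CLT2}, where the notion of a \emph{setting for dynamic stability} is defined. There is therefore no in-paper proof to measure your attempt against, and your proposal has to be judged as a reconstruction of what that sequel must do. Read that way, your strategy --- produce a flow $\flow^{\vec f}$ from the coalgebra $\vec f$, translate the Lyapunov data back and forth, and invoke \cref{thm:Lyap} and \cref{thm:Lyapconv} --- is exactly the strategy the paper telegraphs: Section 1.3 says the sequel adds axioms relating coalgebras to flows, and \cref{prop:lyapunov} carries out precisely your infinitesimal-to-global passage in the smooth-manifold case, with differentiation of $V(\flow(t,x)) \leq V(x)$ giving $TV \circ f \leq \vec 0$ and the comparison lemma giving the converse.

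That said, your proposal is an outline rather than a proof, and you are candid about where the gap lies: both directions hinge on a categorical comparison (Gr\"onwall-type) principle, and you explicitly defer its formulation and justification to unstated axioms of the setting for dynamic stability. Since those axioms are the entire mathematical content separating the flow-based theorem (proved in this paper) from the coalgebra-based one (deferred to \cite{CLT2}), the proposal as written does not establish the theorem; it reduces it to an assumption essentially equivalent in strength to the thing being proved. Concretely, what is missing and what the sequel must supply are (i) an axiom producing a unique flow $\flow^{\vec f}$ from $\vec f$, compatible with $\F$ and with $\vec 0_R$ generating the trivial flow on $R$, and (ii) an order-theoretic comparison axiom on the posetal object $R$ asserting that a trajectory whose ``instantaneous rate'' is $\To$-below zero is globally non-increasing, together with its converse. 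Your outline correctly identifies both ingredients but supplies neither, so the argument is a plausible plan rather than a completed proof.
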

This generalizes the diagram in \eqref{eqn:introclassiclyap} to systems, framed as $\F$-coalgebras, where here $\vec{0}_R \maps R \to \F(R)$ is the ``zero'' system on the stable object. 

\section{Settings for Stability}
\label{sec:catlyap}

The classical version of Lyapunov's theorem states that the property of an equilibrium point being stable is equivalent to the existence of a positive definite and decrescent function---\emph{a Lyapunov function}. The main result of this paper is a categorical generalization of Lyapunov's theorem. The sort of generalization we mean is not to a larger class of dynamical systems. Instead, we are generalizing to ``dynamical systems'' within different categories, where classical dynamical systems are such within the category of Euclidean subspaces, or more broadly, manifolds.

In order to state this generalized Lyapunov theorem, we must lay out precisely what an appropriate setting within a category looks like, one which enables us to discuss systems, their solutions, and comparison of solutions. In the present work, we focus on studying a system through its total solution flow. Roughly speaking, this means our state space should carry an action of a monoid representing the flow of time, e.g., $\Zplus$ or $\Rplus$ for discrete or continuous time systems respectively. We also make use of a space of values which serves as the codomain for measurements, such as norms or Lyapunov functions. We ask this space to be equipped with a partial order with which we compare measurements. These ingredients form what we call a \emph{setting for stability}, the axiomatic framework for which our generalized Lyapunov theorem holds. 

\subsection{Settings for Stability}
\label{sec:setting}

In this section, we lay out the necessary definitions to make rigorous the notion of a setting for stability alluded to in the introduction. The corresponding axioms give a framework for ``doing Lyapunov theory.'' Before presenting these, we review some necessary preliminaries.

\begin{defn}
\label{def:monoid}
    Let $\C$ be a category with finite products. A \define{monoid} $(T, \oplus, 0_\oplus)$ internal to $\C$ is an object $T$ equipped with a multiplication map $\oplus \maps T \times T \to T$ and a unit map $0_\oplus \maps 1 \to T$ such that the following diagrams commute: 
    \[
    \begin{tikzcd}
        T \times T \times T
        \arrow[r, "\id_T \times \oplus"]
        \arrow[d, "\oplus \times \id_T"']
        & 
        T \times T 
        \arrow[d, "\oplus"] 
        \\
        T \times T
        \arrow[r, "\oplus"']
        & 
        T
    \end{tikzcd}
    \qquad\qquad
    \begin{tikzcd}
        T
        \arrow[r, "\id \times 0_\oplus"]
        \arrow[dr, "\id_T"']
        &
        T \times T
        \arrow[d, "\oplus"]
        &
        T
        \arrow[l, "0_\oplus \times id"']
        \arrow[dl, "\id_T"]
        \\&
        T
    \end{tikzcd}
    \]
\end{defn}

\begin{rmk}
    Monoid objects in $\Set$ recover the traditional notion of monoid. Monoid objects are generally definable in a monoidal category. The additional properties afforded by a cartesian monoidal structure are not strictly necessary.
\end{rmk}

\begin{defn}
\label{def:kernel}
    Let $\C$ be a category with a terminal object $1$, $X, Y$ objects of $\C$, and $Y$ equipped with a distinguished point $0_Y \maps 1 \to Y$. The \define{kernel} of a map $f \maps X \to Y$ in $\C$ is the equalizer of $f$ and the composite $0: X \xrightarrow !1 \xrightarrow{0_Y} Y$. 
    \[
    \begin{tikzcd}[column sep = small]
        \ker(f) 
        \arrow[r, "eq_f"]
        &
        X
        \arrow[rr, "f", shift left=1]
        \arrow[rr, "0"', shift right=1]
        \arrow[dr, "!"']
        &&
        Y
        \\&&
        1
        \arrow[ur, "0_Y"']
    \end{tikzcd}
     \]
\end{defn}
Note that even though we refer to the object $\ker(f)$ as the kernel, the data of the equalizing map $eq_f$ is absolutely necessary.

\begin{defn}
\label{def:posetal}
    A \define{posetal object} $R$ in a category $\C$ is one such that for each object $X \in \C$, the hom-set $\C(X, R)$ is equipped with a partial order such that for any map $f \maps X \to Y$, if $g_1 \geq g_2$ in $\C(Y, R)$, then $g_1 \circ f \geq g_2 \circ f$ in $\C(X, R)$.
\end{defn}

This definition is motivated by the case when the objects of $\C$ are something like sets equipped with structure, and $R$ itself is equipped with a partial order. In this scenario, a partial order is induced on any hom-set $\C(X, R)$ pointwise.

The definition of posetal object implies that $\C$ is equipped with the structure of a category enriched in the category of posets and order-preserving maps, with the discrete order assigned to any hom-set $\C(X, Y)$ with $Y \neq R$. There is nothing in the above definition that would impose coherence in the presence of multiple posetal objects. In light of this observation, we are justified in expressing inequalities $g_1 \geq g_2$ in these hom-posets as unlabeled 2-dimensional morphisms $g_1 \To g_2$, i.e., lax commuting diagrams. The diagram below demonstrates the order-preservation of left-whiskering in \cref{def:posetal}.
\[\begin{tikzcd}
    X
    \arrow[r, "f"]
    &
    Y
    \arrow[r, bend left, "g_1"]
    \arrow[r, phantom, "\Downarrow"]
    \arrow[r, bend right, "g_2"']
    &
    R
\end{tikzcd}
\qquad 
\mapsto
\qquad 
\begin{tikzcd}[column sep = small, row sep = small]
    &
    Y
    \arrow[dd, Rightarrow]
    \arrow[dr, "g_1"]
    \\
    X
    \arrow[ur, "f"]
    \arrow[dr, "f"']
    &&
    R
    \\&
    Y
    \arrow[ur, "g_2"']
\end{tikzcd}
\]
Beginning with the proof of \cref{lem:normproperties}, we frequently construct 2-morphisms as above via ``pasting diagrams''. A rigorous and self-contained exposition of the calculus of pasting diagrams can be found in section A.4 of Schommer-Pries' thesis \cite{SchommerPriesThesis}.

\begin{rmk}
    A more careful definition of posetal object asks $\C$ to be a 2-category, and the appropriate hom-categories to have the property (as opposed to structure as above) of being a poset. In the present work, we do not consider more than one posetal object at a time, and thus delay such a careful treatment for future work. 
\end{rmk}

\begin{rmk}
\label{rmk:locallyordered}
    If $R$ is a poset considered as a category, then a natural transformation of the form $\alpha \maps F \To G \maps X \to R$ has components which are uniquely determined by their domain and codomain. Thus such a natural transformation is unique if it exists, making the category $\Cat(X, R)$ of functors and natural transformations a poset. This is the motivating example for the definition of posetal object. 
\end{rmk}

We are now ready to present the main definition of this paper: an axiomatic framework for Lyapunov theory.

\begin{defn}
\label{def:setting}
    A \define{setting for stability} in $\C$, denoted $(\C,T,E,R,\To,d)$ or $(T,E,R)$ when this does not lead to confusion, consists of:
    \begin{enumerate}
    \setcounter{enumi}{-1}
        \item[\textbf{S0:}] (setting) a category $\C$ with all finite products
        \item[\textbf{S1:}] (space) an object $E \in \C$
        \item[\textbf{S2:}] (time) an object $T$ equipped with the structure of a monoid: $(T, \oplus, 0_\oplus)$ (see \cref{def:monoid})
        \item[\textbf{S3:}] (stable object) a posetal object $R \in \C$ (see \cref{def:posetal}) equipped with a distinguished point $0_R \maps 1 \to R$ 
        \item[\textbf{S4:}] (distance) a morphism in $\C$, denoted by $
        d \maps E \times E \to R$, such that:
        \begin{itemize}
            \item $d \To 0$, 
            \item $\ker(d) \cong \Delta \maps E \to E \times E$ (see \cref{def:kernel}), with $\Delta$ the diagonal map.
        \end{itemize} 
        This can be depicted in a single lax commuting diagram. 
        \begin{eqnarray}
        \label{eqn:distanceaxiom}
        \begin{tikzcd}
            E
            \arrow[r, "\Delta"]
            &
            E \times E
            \arrow[r, bend left, "d", ""'{name = A}, pos = 0.4]
            \arrow[r, phantom, "\Downarrow", pos = 0.35]
            \arrow[r, bend right, "0"', ""{name = B}, pos = 0.45]
            &
            R
            \\
            \ker(d)
            \arrow[u, "\cong"]
            \arrow[ur, "eq"']
        \end{tikzcd}
        \end{eqnarray}
    \end{enumerate}
\end{defn}

\begin{rmk}
    To illustrate the constructions throughout this section, consider the case where $\C$ is a concrete category and therefore objects have the underlying structure of a set, morphisms the underlying structure of a function. As a result, for a setting for stability in $\C$ we can consider points $a, b \in T$, $x, y \in E$ and the distinguished point $0_R \in R$. We will illustrate the constructions on this example when presenting new concepts. For example, the conditions on the distance morphism in \textbf{S4} can be stated in this setting as:  
    \begin{itemize}
        \item $d \To 0$, \ex{$d(x, y) \geq 0$}
        \item $\ker(d) \cong \Delta \maps E \to E \times E$. \ex{$d(x, y) = 0$ iff $x = y$}
    \end{itemize} 
\end{rmk}

\begin{defn}
\label{def:norm}
    Given a point $x^* \maps 1 \to E$, the \define{norm} $\|\cdot\|_{x^*} \maps E \to R$ relative to $x^*$ is the following composite.
    \[
    \|\cdot\|_{x^*} \maps E \xrightarrow{id_E \times x^*} E \times E \xrightarrow d R
    \exmath{\| x \|_{x^*} \coloneq d(x, x^*)}
    \]
\end{defn}

\begin{lem}
\label{lem:normproperties}
    A norm $\|\cdot\|_{x^*} \maps E \to R$ satisfies: 
    \begin{itemize}
        \item Positivity: $ \|\cdot\|_{x^*} \To 0$, 
        \ex{$\| x \|_{x^*} \geq 0$}
        \item Point-separating: \[
        \begin{tikzcd}
            \ker(\|\cdot\|_{x^*})
            \arrow[r, "eq"]
            \arrow[d, "\cong"']
            &
            E
            \\
            1
            \arrow[ur, "x^*"']
        \end{tikzcd}
        \exmath{\| x \|_{x^*} = 0 \quad \textrm{iff} \quad   x = x^*}
        \]
    \end{itemize}  
\end{lem}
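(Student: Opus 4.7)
The plan is to derive both properties of $\|\cdot\|_{x^*}$ from the two axioms on $d$ in \textbf{S4}, using that $\|\cdot\|_{x^*}$ is by definition the composite $d \circ (\id_E \times x^*)$.

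For positivity, I will use the order-preservation of pre-composition guaranteed by \cref{def:posetal}. The inequality $d \To 0$ in $\C(E\times E, R)$ can be left-whiskered by $\id_E \times x^* \maps E \to E \times E$ to produce a 2-morphism $d \circ (\id_E \times x^*) \To 0 \circ (\id_E \times x^*)$ in $\C(E,R)$. Since $0 \maps E \times E \to R$ factors as $0_R \circ {!}_{E\times E}$, post-composing with $\id_E \times x^*$ still yields the zero composite $0_R \circ {!}_E$, so the right-hand side is the zero morphism on $E$ and we conclude $\|\cdot\|_{x^*} \To 0$.

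For the point-separating property, I will verify that $x^* \maps 1 \to E$ is the equalizer of $\|\cdot\|_{x^*}$ and the zero morphism $E \to R$. The key computation is
\[
\|\cdot\|_{x^*} \circ x^* \;=\; d \circ (\id_E \times x^*) \circ x^* \;=\; d \circ \Delta \circ x^*,
\]
where the last equality uses the product description $(\id_E \times x^*) \circ x^* = \langle x^*, x^*\rangle = \Delta \circ x^*$. By \textbf{S4}, $\Delta$ equalizes $d$ and $0$, so $d \circ \Delta = 0$, making $x^*$ equalize $\|\cdot\|_{x^*}$ and $0$ as desired.

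To finish, I will show universality. Given any $h \maps X \to E$ with $\|\cdot\|_{x^*} \circ h = 0$, rewrite $(\id_E \times x^*) \circ h = \langle h, x^* \circ {!}_X\rangle$; then $d \circ \langle h, x^* \circ {!}_X\rangle = 0$ exhibits $\langle h, x^* \circ {!}_X\rangle$ as equalizing $d$ and $0$. By the universal property of $\ker(d) \cong \Delta$, this pair factors through $\Delta$, i.e., equals $\Delta \circ \tilde h$ for a unique $\tilde h$, forcing both components to agree: $h = \tilde h = x^* \circ {!}_X$. Hence $h$ factors uniquely through $x^* \maps 1 \to E$, establishing $\ker(\|\cdot\|_{x^*}) \cong 1$ with equalizing map $x^*$. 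The only subtlety I anticipate is being careful about the distinction between the several morphisms all denoted ``$0$'' (into $R$ from various domains) and tracking how they factor through $1$; this is routine but worth spelling out to make the universal property argument airtight.
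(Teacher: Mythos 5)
Your proof is correct and follows essentially the same route as the paper's: positivity by whiskering $d \To 0$ with $\id_E \times x^*$ and collapsing the zero composite through the terminal object, and point-separation via the identity $(\id_E \times x^*)\circ x^* = \Delta \circ x^*$ together with axiom \textbf{S4}. In fact your argument is slightly more complete, since you also verify the universal property of the equalizer (that any $h$ killed by $\|\cdot\|_{x^*}$ factors uniquely through $x^*$ via the factorization of $\langle h, x^*\circ {!}_X\rangle$ through $\Delta$), a step the paper's proof leaves implicit after checking only that $x^*$ equalizes.
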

\begin{proof}
    Positivity follows from the following lax commuting diagram.
    \[
    \begin{tikzcd}
        E
        \arrow[rrr, bend left = 50, "\|\cdot\|_{x^*}"]
        \arrow[r, "id_E \times x^*"]
        \arrow[drr, "!"', bend right = 30]
        &
        E\times E
        \arrow[rr, "d", ""'{name = A}]
        \arrow[dr, "!"']
        &&
        R
        \\&&
        1
        \arrow[ur, "0_R"', bend right = 10]
        \arrow[from = A, to = 2-3, Rightarrow]
    \end{tikzcd}
    \]
    Point-separating:
    We can see that $(1, x^*)$ does equalize $\|\cdot\|_{x^*}$ and $0$ by the following diagram:
    \[
    \begin{tikzcd}
        &
        E
        \arrow[drrr, "\|\cdot\|_{x^*}"]
        \arrow[dr, "\id_E \times x^*"description]
        \\
        1
        \arrow[ur, "x^*"]
        \arrow[dr, "x^*"']
        &&
        E \times E
        \arrow[rr, "d"description]
        &&
        R
        \\&
        E
        \arrow[ur, "\Delta"]
        \arrow[r, "\Delta"']
        &
        E \times E
        \arrow[urr, "0"description]
        \arrow[r, "!"']
        &
        1
        \arrow[ur, "0_R"']
    \end{tikzcd}
    \]
    The lower path is clearly $0 \circ x^*$.
\end{proof}

\subsection{Positive Definite Morphisms}

\begin{defn}
    A map $\alpha \maps R \to R$ is \define{order-preserving} if right-whiskering, i.e., $\alpha_* \maps \C(X, R) \to \C(X, R)$ given by $\alpha_*(f) = \alpha \circ f$, is order-preserving on the hom-posets. In other words, if $f \geq g \maps X \to R$, then $\alpha \circ f \geq \alpha \circ g$.
\end{defn}
Diagrammatically, the condition above can be stated as:
\[\begin{tikzcd}
    X
    \arrow[r, bend left, "f"]
    \arrow[r, phantom, "\Downarrow"]
    \arrow[r, bend right, "g"']
    &
    R
    \arrow[r, "\alpha"]
    &
    R
\end{tikzcd}
\qquad 
\mapsto
\qquad 
\begin{tikzcd}[column sep = small, row sep = small]
    &
    R
    \arrow[dd, Rightarrow]
    \arrow[dr, "\alpha"]
    \\
    X
    \arrow[ur, "f"]
    \arrow[dr, "g"']
    &&
    R
    \\&
    R
    \arrow[ur, "\alpha"']
\end{tikzcd}\]

\begin{defn}
\label{def:classKmorphism}
    A morphism $\alpha \maps R \to R$ is of \define{class $\K$} if:
    \begin{itemize}
        \item $\alpha$ is an order-preserving map 
        \item $\alpha$ has an order-preserving inverse $\alpha\inv$
        \item $\alpha \circ 0_R = 0_R$.
    \end{itemize} 
\end{defn}

\begin{rmk}
    We do not ask for our class $\K$ morphisms to be continuous. In our formulation, this is a property that should be supplied by the category of interest. Continuous class $\K$ functions are the class $\K$ morphisms in $\Top$, the category of topological spaces and continuous maps.
\end{rmk}

\begin{lem}
    If $\alpha \maps R \to R$ is class $\K$, then so is its inverse $\alpha\inv \maps R \to R$. 
\end{lem}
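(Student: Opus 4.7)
The plan is to verify the three bullets of \cref{def:classKmorphism} for $\alpha\inv$ in turn. Each follows essentially formally from the fact that $\alpha$ itself satisfies the same three bullets, so there is no real obstacle; the ``hard part'', if any, is just being careful about the self-referential structure of the definition.

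First, $\alpha\inv$ is order-preserving: this is built directly into the hypothesis that $\alpha$ is class $\K$, since the second bullet of \cref{def:classKmorphism} requires $\alpha\inv$ to be order-preserving. Second, $\alpha\inv$ admits an order-preserving inverse, namely $\alpha$ itself: we have $(\alpha\inv)\inv = \alpha$, and the first bullet applied to $\alpha$ says that $\alpha$ is order-preserving.

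Third, we must check that $\alpha\inv \circ 0_R = 0_R$. For this, I would start from the identity $\alpha \circ 0_R = 0_R$ (the third bullet applied to $\alpha$), whisker on the left with $\alpha\inv$, and use $\alpha\inv \circ \alpha = \id_R$:
\[
\alpha\inv \circ 0_R \;=\; \alpha\inv \circ (\alpha \circ 0_R) \;=\; (\alpha\inv \circ \alpha) \circ 0_R \;=\; \id_R \circ 0_R \;=\; 0_R.
\]
Combining these three verifications gives that $\alpha\inv$ is class $\K$, completing the proof.
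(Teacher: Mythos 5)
Your proof is correct and follows essentially the same route as the paper's: both read off order-preservation of $\alpha\inv$ and of its inverse $\alpha$ directly from \cref{def:classKmorphism}, and both derive $\alpha\inv \circ 0_R = 0_R$ by composing the identity $\alpha \circ 0_R = 0_R$ with $\alpha\inv$. Your write-up is merely a bit more explicit about which bullet of the definition supplies each fact.
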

\begin{proof}
    Trivially follows since $\alpha$ and $\alpha^{-1}$ are both order preserving by definition and: $\alpha \circ 0_R = 0_R$ iff $\alpha^{-1} \circ \alpha \circ 0_R = \alpha^{-1} \circ 0_R$ iff $0_R = \alpha^{-1} \circ 0_R$. 
\end{proof}

\begin{defn}
\label{def:positivedef}
    A morphism $V \maps E \to R$ is \define{positive definite} with respect to $x^* \maps 1 \to E$ if there exist class $\K$ morphisms $\underline\alpha, \overline \alpha \maps R \to R$ such that the following diagram lax commutes.
    \[\begin{tikzcd}
        &
        R
        \arrow[dr, "\overline \alpha"]
        \\
        E
        \arrow[ur, "\|\cdot\|_{x^*}"]
        \arrow[rr, "V"{name = V}, ""'{name = B}]
        \arrow[dr, "\|\cdot\|_{x^*}"']
        &&
        R
        \\&
        R
        \arrow[ur, "\underline \alpha"']
        \arrow[from = 1-2, to = V, Rightarrow, ""]
        \arrow[from = B, to = 3-2, Rightarrow, ""]
    \end{tikzcd}
    \exmath{\underline{\alpha}(\| x \|_{x^*}) \leq V(x) \leq \overline{\alpha}(\| x\|_{x^*})}
    \]
\end{defn}

\begin{lem}
  If $V \maps E \to R$ is positive definite with respect to a point $x^* \maps 1 \to E$ then: 
  \begin{itemize}
    \item $V \To 0$   \ex{$V(x) \geq 0$}
    \item $\ker V  \cong 1 \xrightarrow{x^* = eq} E$ 
    \ex{$V(x) = 0$ iff $x = x^*$}
\end{itemize}
\end{lem}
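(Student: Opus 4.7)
The plan is to extract from \cref{def:positivedef} the two inequalities $\underline\alpha \circ \|\cdot\|_{x^*} \leq V \leq \overline\alpha \circ \|\cdot\|_{x^*}$ (reading off the two 2-cells, recalling that $g_1 \To g_2$ means $g_1 \geq g_2$), and combine them with the properties of the norm established in \cref{lem:normproperties} together with the fact that class $\K$ morphisms and their inverses send $0_R$ to $0_R$. Nothing deep is at stake; the work is entirely a matter of pasting 2-cells and invoking antisymmetry of the hom-posets.

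For positivity ($V \To 0$), I would start from $\|\cdot\|_{x^*} \To 0$ and right-whisker by $\underline\alpha$, which is order-preserving, to get $\underline\alpha \circ \|\cdot\|_{x^*} \To \underline\alpha \circ 0 = 0$, using $\underline\alpha \circ 0_R = 0_R$. Vertically pasting this with the lower 2-cell $V \To \underline\alpha \circ \|\cdot\|_{x^*}$ from positive definiteness yields $V \To 0$.

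For the point-separation property, I would first verify that $x^*$ equalizes $V$ and $0$. Pre-composing both 2-cells of \cref{def:positivedef} with $x^*$ and using $\|\cdot\|_{x^*} \circ x^* = 0$ from \cref{lem:normproperties}, together with $\overline\alpha \circ 0_R = \underline\alpha \circ 0_R = 0_R$, sandwiches $V \circ x^*$ between $0$ and $0$, and antisymmetry of $\C(1,R)$ forces $V \circ x^* = 0$. By the universal property of the equalizer this induces a unique morphism $\sigma \maps 1 \to \ker V$ with $eq_V \circ \sigma = x^*$. For the reverse direction, pre-compose the analogous argument with $eq_V \maps \ker V \to E$: on one side $V \circ eq_V = 0$ by definition of equalizer, and on the other side $\underline\alpha \circ \|\cdot\|_{x^*} \circ eq_V \geq 0$ by positivity of $\underline\alpha \circ \|\cdot\|_{x^*}$; antisymmetry forces $\underline\alpha \circ \|\cdot\|_{x^*} \circ eq_V = 0$, and post-composing with $\underline\alpha\inv$ (using $\underline\alpha\inv \circ 0_R = 0_R$) gives $\|\cdot\|_{x^*} \circ eq_V = 0$. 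By the universal property of $\ker \|\cdot\|_{x^*} \cong 1$ from \cref{lem:normproperties}, I obtain a unique $\tau \maps \ker V \to 1$ with $eq_V = x^* \circ \tau$. Finally $\tau \circ \sigma = \id_1$ because $1$ is terminal, and $\sigma \circ \tau = \id_{\ker V}$ by monicness of $eq_V$, since $eq_V \circ \sigma \circ \tau = x^* \circ \tau = eq_V$.

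The main obstacle, to the extent there is one, is notational: keeping straight which ``$0$'' denotes the composite $0_R \circ !$ in each particular hom-set, and invoking antisymmetry of the hom-poset at exactly the two places where a two-sided inequality is collapsed to an equation. Since the category $\C$ is not assumed concrete, everything must be phrased via universal properties and diagrammatic reasoning rather than pointwise arguments, but the structure of the proof mirrors the classical one line for line.
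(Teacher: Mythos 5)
Your proof is correct and follows the same strategy as the paper's: positivity is obtained by whiskering $\|\cdot\|_{x^*} \To 0$ with the order-preserving $\underline\alpha$ (using $\underline\alpha \circ 0_R = 0_R$) and pasting with $V \To \underline\alpha \circ \|\cdot\|_{x^*}$, and the kernel claim is handled by sandwiching $V \circ x^*$ between $0$ and $0$ and by using $\underline\alpha\inv$ to transfer vanishing of $V$ to vanishing of the norm. The one place you go beyond the paper is in the second bullet: the paper verifies the two implications on global points (``$V(x^*)=0$'' and ``$V(x)=0$ implies $x=x^*$'' for $x \maps 1 \to E$), whereas you establish the isomorphism $\ker V \cong 1$ in full, producing the comparison maps $\sigma$ and $\tau$ from the universal properties of the two equalizers and checking they are mutually inverse via terminality of $1$ and monicness of $eq_V$. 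Since $\C$ is not assumed well-pointed or concrete, your point-free version is the argument actually needed to justify the statement as written; the paper's element-wise phrasing is an informal shorthand for exactly this.
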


\begin{proof}
    For the first assertion, consider the following diagram, which is a pasting of the definition and \cref{lem:normproperties} \[\begin{tikzcd}
        E
        \arrow[rr, "V"{name = V}, ""'{name = B}]
        \arrow[dr, "\|\cdot\|_{x^*}"description, ""'{name = C}]
        \arrow[dr, "0"'{name = D}, bend right=50, ""]
        &&
        R
        \\
        &
        R
        \arrow[ur, "\underline \alpha"']
        \arrow[from = C, to = D, Rightarrow, ""]
        \arrow[from = B, to = 2-2, Rightarrow, ""]
    \end{tikzcd}
    \]
    where the bottom path is clearly equal to 0. For the second assertion, assume $V$ is bounded by class $\K$ functions. We can see that $V(x^*)=0$ by the following pasting diagram, where the frames without a 2-cell filler commute strictly.
    \[
    \begin{tikzcd}
        &&
        R
        \arrow[dr, "\overline \alpha"]
        \\
        1
        \arrow[urr, "0_R"]
        \arrow[r, "x^*"description]
        \arrow[drr, "0_R"']
        \arrow[rrr, bend left = 90, "0_R"]
        \arrow[rrr, bend right = 90, "0_R"']
        &
        E
        \arrow[ur, "\|\cdot\|_{x^*}"description]
        \arrow[rr, "V"description, ""{name = A}, ""'{name = B}]
        \arrow[dr, "\|\cdot\|_{x^*}"description]
        &&
        R
        \\&&
        R
        \arrow[ur, "\underline \alpha"]
        \arrow[from = 1-3, to = A, Rightarrow]
        \arrow[from = B, to = 3-3, Rightarrow]
    \end{tikzcd}
    \]
    If $V(x) = 0$, then $\|x\|_{x^*}=0$ by the following pasting diagram:
    \[
    \begin{tikzcd}
        1
        \arrow[drrrr, "0_R", bend left]
        \arrow[drrr, "0_R"]
        \arrow[dr, "x"']
        \\&
        E
        \arrow[rr, "V", ""'{name = A}]
        \arrow[dr, "\|\cdot\|_{x^*}"']
        &&
        R
        \arrow[r, "\underline \alpha\inv"]
        &
        R
        \\&&
        R
        \arrow[ur, "\underline\alpha"]
        \arrow[from = A, to = 3-3, Rightarrow]
        \arrow[urr, "\id_R"']
    \end{tikzcd}
    \]
   and thus $x=x^*$. Hence $V$ is positive definite. 
\end{proof}

\subsection{Flows}

In this section, we choose to take as a starting point systems which admit a global flow, so that they may be represented by an action of a monoid on the space. This leads to the notions of equilibrium point and stable point relative to this data defined in the next section.

\begin{defn}
\label{def:flow}
    A \define{$T$-flow} $\flow \maps T \times E \to E$ on an object $E$ is an action of $T$ on $E$, meaning the following diagrams commute: 
    \[
    \begin{array}{c}
    \textit{Initialization:} \\
    \begin{tikzcd}
        E
        \arrow[r, "0_\oplus \times \id_E"]
        \arrow[dr, "\id_E"']
        &
        T \times E 
        \arrow[d, "\flow"]
        \\&
        E
    \end{tikzcd}
    \\
    \excenter{\flow(0, x) = x}
    \end{array}
    \qquad\qquad
    \begin{array}{c}
    \textit{Composition:} \\
    \begin{tikzcd}
        T \times T \times E
        \arrow[r, "\oplus \times \id_E"]
        \arrow[d, "\id_T \times \flow"']
        &
        T \times E
        \arrow[d, "\flow"]
        \\
        T \times E 
        \arrow[r, "\flow"']
        &
        X
    \end{tikzcd} 
    \\
    \excenter{\flow(t_1, \flow(t_2, x)) = \flow(t_1 + t_2, x)}
    \end{array}
    \]
    A \define{morphism of $T$-flows} $p \maps \flow_X \to \flow_Y$ is a map $p \maps X \to Y$ such that the following diagram commutes.
    \[
    \begin{tikzcd}
        T \times X
        \arrow[r, "\id_T \times p"]
        \arrow[d, "\flow_X"']
        &
        T \times Y
        \arrow[d, "\flow_Y"]
        \\
        X 
        \arrow[r, "p"']
        &
        Y
    \end{tikzcd}
    \exmath{p (\flow_X(t,x) )= \flow_Y(t,p(x))}
    \]
    Let $\TFlow$ denote the category of $T$-flows and morphisms of $T$-flows.
\end{defn}

\begin{rmk}
Note that the definition above is of a \emph{left} action of $T$ on $E$. There is also a notion of \emph{right} actions of monoids, but right actions can be converted into left actions and vice versa, such that they are equivalent. This is due to our choice of working in cartesian categories, in particular symmetric monoidal categories. 
\end{rmk}

\section{Categorical Lyapunov Theorem}

This section will establish that, given a setting for stability, Lyapunov morphisms give a complete characterization of the stability. To this end, we first formulate stability of equilibria categorically, followed by Lyapunov morphisms $V \maps E \to R$: morphisms that are positive definite and decrescent. We then prove that the existence of a Lyapunov morphism is both sufficient and necessary for the stability of equilibria.

\subsection{Equilibria and Stability}

For all subsequent definitions, assume a setting for stability, and $E$ be equipped with a flow $\flow \maps T \times E \to E$.

\begin{defn}
\label{def:equlibriumpoint}
    An element $x^* \maps 1 \to E$ is an \define{equilibrium point} if the following diagram commutes:
    \[
    \begin{tikzcd}
        T
        \arrow[r, "id_T \times x^*"]
        \arrow[d, "!"']
        &
        T \times E
        \arrow[d, "\flow"]
        \\
        1
        \arrow[r, "x^*"']
        &
        E
    \end{tikzcd}
    \exmath{\flow_t(x^*) = x^*}
    \]
\end{defn}

\begin{rmk}
    The commutativity of the diagram given in \cref{def:equlibriumpoint} is equivalent to that of the following diagram.
    \[\begin{tikzcd}
        T
        \arrow[r, "\id_T \times x^*"]
        \arrow[d, "\id_T \times x^*"']
        &
        T\times E
        \arrow[d, "\flow"]
        \\
        T \times E
        \arrow[r, "\pi_E"']
        &
        E
    \end{tikzcd}\]
\end{rmk}

\begin{defn}
\label{def:stable}
    An equilibrium point $x^* \maps 1 \to E$ is \define{stable} if there is a class $\K$ morphism $\alpha$ such that the following diagram lax commutes:  
    \[
    \begin{tikzcd}
        T \times E 
        \arrow[rr, "\flow"]
        \arrow[d, "\pi"']
        &&
        E
        \arrow[d, "\|\cdot\|_{x^*}"]
        \\
        E
        \arrow[r, "\|\cdot\|_{x^*}"']
        \arrow[urr, Rightarrow, ""]
        &
        R
        \arrow[r, "\alpha"']
        &
        R
    \end{tikzcd}
    \exmath{\| \flow(t, x) \|_{x^*} \leq \alpha(\|x\|_{x^*})}
    \]
\end{defn}

\begin{rmk}
    Note that Lyapunov theory is generally concerned with the forward trajectory of a point in a system, and thus the appropriate time objects tend to be positive monoids, e.g., $\Rplus$ or $\Zplus$. The framework presented here permits groups such as $\R$ or $\Z$ to take the role of time object, but the resulting notions of stability may be unfamiliar. In practice, it is often possible to find a suitable sub-monoid of a group time object. For instance, if $T$ is an ordered group, taking the \define{positive cone} $T_{\geq0} = \{t \in T \mid 0 \leq t\}$ may be a good time object, as previously mentioned. This definition can be stated internally via a taking a slice over $0$.
\end{rmk}

\begin{defn}
\label{def:weaklycontracting}
    A flow $\flow \maps T \times E \to E$ is said to be \define{weakly contracting} if the following diagram lax commutes, where $\sigma$ denotes the canonical symmetry map. 
    \[
    \begin{array}{c}
    \begin{tikzcd}
        T \times E \times E 
        \arrow[r, "\pi"]
        \arrow[d, "\Delta \times \id"']
        &
        E \times E
        \arrow[r, "d"]
        \arrow[d, Rightarrow]
        &
        R
        \\
        T \times T \times E \times E
        \arrow[r, "\id \times \sigma \times \id"']
        & 
        T \times E \times T \times E
        \arrow[r, "\flow \times \flow"']
        &
        E\times E 
        \arrow[u, "d"]
    \end{tikzcd}
    \\
    \excenter{d(\flow_t(x),\flow_t(y)) \leq d(x,y)}
    \end{array}
    \]
\end{defn}

\begin{prop}
\label{prop:weaklycontracting}
    If $\flow$ is weakly contracting, then any equilibrium point is stable.
\end{prop}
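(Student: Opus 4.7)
The plan is to show that stability holds with the class $\K$ morphism $\alpha = \id_R$. First I would verify that $\id_R$ is indeed class $\K$: it is trivially order-preserving, its inverse is itself, and $\id_R \circ 0_R = 0_R$. So it suffices to exhibit a lax commuting square
\[
\|\cdot\|_{x^*} \circ \flow \;\To\; \|\cdot\|_{x^*} \circ \pi_E
\]
from $T \times E$ to $R$, which unpacks (in the concrete heuristic) to $\|\flow(t,x)\|_{x^*} \leq \|x\|_{x^*}$.

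The key idea is to instantiate the weak contraction inequality at $y = x^*$. To this end, I would precompose the weak contraction diagram with the map $\id_{T \times E} \times x^* \maps T \times E \cong T \times E \times 1 \to T \times E \times E$. On the top-right path of the weak contraction square this produces $d \circ \pi \circ (\id_{T \times E} \times x^*) = d \circ (\id_E \times x^*) \circ \pi_E = \|\cdot\|_{x^*} \circ \pi_E$, which by \cref{def:norm} is exactly the desired bottom-left path of the stability square. On the bottom-left path of the weak contraction square, the precomposition produces
\[
d \circ (\flow \times \flow) \circ (\id_T \times \sigma \times \id_E) \circ (\Delta_T \times \id_E) \circ (\id_{T \times E} \times x^*),
\]
which, after simplifying the product rearrangement, equals $d \circ (\flow \times (\flow \circ (\id_T \times x^*) \circ \mathop{!}))$ applied to $(t,x)$. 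Using the equilibrium condition from \cref{def:equlibriumpoint}, the second component $\flow \circ (\id_T \times x^*) \circ \mathop{!}$ equals $x^* \circ \mathop{!}$, so this composite becomes $d \circ (\flow \times x^*) = d \circ (\id_E \times x^*) \circ \flow = \|\cdot\|_{x^*} \circ \flow$, which is the top-right path of the stability square. The 2-cell of the weak contraction diagram, left-whiskered by the above map, supplies the 2-cell of the stability square (using the left-whiskering compatibility from \cref{def:posetal}).

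The routine bookkeeping is just the product/symmetry manipulations showing that precomposing the top-right path yields $\|\cdot\|_{x^*} \circ \pi_E$ and precomposing the bottom-left path yields $\|\cdot\|_{x^*} \circ \flow$ after invoking the equilibrium condition; this is most cleanly displayed as a pasting diagram combining the weak contraction square, the equilibrium square, and the defining composite of $\|\cdot\|_{x^*}$. The main obstacle, and the only real subtlety, is making the pasting diagram unambiguous: one must carefully track how $T \times E$ is embedded into $T \times E \times E$ via $(\id, x^* \circ \mathop{!})$ and ensure the symmetry $\sigma$ and diagonal $\Delta_T$ interact correctly so that the map in fact becomes $(t,x) \mapsto (t,x,t,x^*)$. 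Once this pasting is drawn, extracting the class $\K$ morphism $\alpha = \id_R$ and reading off the stability square is immediate.
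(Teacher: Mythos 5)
Your proposal is correct and is essentially the paper's own proof: the paper's pasting diagram is exactly the weak contraction square precomposed with $\id_{T\times E}\times x^*$, with the strictly commuting cells handling the cartesian rearrangements and one cell invoking the equilibrium condition, yielding stability with $\alpha = \id_R$. The only difference is presentational—you describe the whiskering in prose while the paper draws the full pasting diagram explicitly.
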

\begin{proof}
Let $x^* \maps 1 \to E$ be an equilibrium point. The diagram below demonstrates that $x^*$ must be stable, with class $\K$ function $\alpha = \id_R$.
\[\begin{tikzcd}
	&& E \\
	& TEE && EE \\
	TE && TTEE & TETE &&& R \\
	& TTE & TET & ET & ETE & EE \\
	&&& TE & E
	\arrow["{id \times x^*}"{description}, from=1-3, to=2-4]
	\arrow["\norm", bend left = 15, from=1-3, to=3-7]
	\arrow["\pi"', from=2-2, to=2-4]
	\arrow["{\Delta \times \id }"{description}, from=2-2, to=3-3]
	\arrow[shorten <=2pt, shorten >=2pt, Rightarrow, from=2-4, to=3-4]
	\arrow["d"{description}, from=2-4, to=3-7]
	\arrow["\pi", bend left, from=3-1, to=1-3]
	\arrow["{\id \times x^*}"{description}, from=3-1, to=2-2]
	\arrow["{\Delta \times \id }", from=3-1, to=4-2]
	\arrow["\id"', bend right, from=3-1, to=5-4]
	\arrow["{\id \times \sigma \times \id }"{description}, from=3-3, to=3-4]
	\arrow["{\flow \times \id }"{description}, from=3-4, to=4-5]
	\arrow["{\flow \times \flow}", from=3-4, to=4-6]
	\arrow["{\id \times x^*}"{description}, from=4-2, to=3-3]
	\arrow["{\id \times  \sigma}"{description}, from=4-2, to=4-3]
	\arrow["{\pi_{1,3}}"{description}, from=4-2, to=5-4]
	\arrow["{\id \times x^*}"{description}, from=4-3, to=3-4]
	\arrow["\flow\times \id "{description}, from=4-3, to=4-4]
	\arrow["\pi"{description}, from=4-3, to=5-4]
	\arrow["{\id \times x^*}"{description}, from=4-4, to=4-5]
	\arrow["\pi"{description}, from=4-4, to=5-5]
	\arrow["{\id\times \flow}"{description}, from=4-5, to=4-6]
	\arrow["d", from=4-6, to=3-7]
	\arrow["\flow"{description}, from=5-4, to=5-5]
	\arrow["\norm"', bend right, from=5-5, to=3-7]
	\arrow["{\id \times x^*}"{description}, from=5-5, to=4-6]
\end{tikzcd}\]
The only lax commuting part of the diagram comes from the condition that $\flow$ is weakly contracting. All the strictly commuting parts are straightforward manipulations of the cartesian monoidal structure, except one which utilizes the fact that $x^*$ is an equilibrium point.
\end{proof}

\subsection{Categorical Lyapunov Theorem}

A setting for stability gives the ability to reframe Lyapunov's classic theorem (on flows) categorically via Lyapunov morphisms. 

\begin{defn}
\label{def:lyapunov}
    A morphism $V \maps E \to R$ is called a \define{Lyapunov morphism} for $\flow$ with equilibrium point $x^* \maps 1 \to E$ if:
    \begin{enumerate}
        \item (positive definite) $V$ is bounded by class $\K$ morphisms
        \[\begin{tikzcd}
            &
            R
            \arrow[dr, "\overline \alpha"]
            \\
            E
            \arrow[ur, "\|\cdot\|_{x^*}"]
            \arrow[rr, "V"{name = V}, ""'{name = B}]
            \arrow[dr, "\|\cdot\|_{x^*}"']
            &&
            R
            \\&
            R
            \arrow[ur, "\underline \alpha"']
            \arrow[from = 1-2, to = V, Rightarrow, ""]
            \arrow[from = B, to = 3-2, Rightarrow, ""]
        \end{tikzcd}
        \exmath{V(x) \geq 0, V(x) = 0 \text{ iff } x = x^*.}
        \]
        
        \item (decrescent) the following diagram lax commutes.
        \[
        \begin{tikzcd}
            T \times E
            \arrow[r, "\flow"]
            \arrow[d, "\pi"']
            &
            E
            \arrow[d, "V"]
            \\
            E
            \arrow[r, "V"']
            \arrow[ur, Rightarrow]
            &
            R
        \end{tikzcd}
        \exmath{ V( \flow(t, x) ) \leq V(x) }
        \]
    \end{enumerate}
\end{defn}

\begin{thm}[Categorical Lyapunov theorem]
\label{thm:Lyap}
    In a setting for stability let $\flow \maps T \times E \to E$ be a flow, and $x^* \maps 1 \to E$ an equilibrium point. If there exists a Lyapunov morphism $V$ for $\flow$, then $x^*$ is a stable equilibrium point. 
\end{thm}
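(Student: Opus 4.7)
The plan is to exhibit the class $\K$ morphism witnessing stability as the composite $\alpha \coloneq \underline{\alpha}^{-1} \circ \overline{\alpha}$, where $\underline{\alpha}, \overline{\alpha}$ are the class $\K$ bounds supplied by the positive definiteness of the Lyapunov morphism $V$. Since $\underline{\alpha}^{-1}$ is class $\K$ by the lemma preceding \cref{def:positivedef}, and composites of class $\K$ morphisms are visibly class $\K$ (order preservation and preservation of $0_R$ are closed under composition, and $(\underline{\alpha}^{-1} \circ \overline{\alpha})^{-1} = \overline{\alpha}^{-1} \circ \underline{\alpha}$ provides an order-preserving inverse), $\alpha$ is a legitimate candidate.

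With $\alpha$ in hand, the task reduces to exhibiting a lax commuting square $\|\cdot\|_{x^*} \circ \flow \To \alpha \circ \|\cdot\|_{x^*} \circ \pi$ in $\C(T \times E, R)$. I would assemble this from three separate inequalities chained through the hom-poset $\C(T \times E, R)$:
\begin{enumerate}
    \item pre-whiskering the lower bound $\underline{\alpha} \circ \|\cdot\|_{x^*} \To V$ by $\flow \maps T \times E \to E$ to obtain $\underline{\alpha} \circ \|\cdot\|_{x^*} \circ \flow \To V \circ \flow$;
    \item the decrescent condition itself, $V \circ \flow \To V \circ \pi$;
    \item pre-whiskering the upper bound $V \To \overline{\alpha} \circ \|\cdot\|_{x^*}$ by $\pi \maps T \times E \to E$ to obtain $V \circ \pi \To \overline{\alpha} \circ \|\cdot\|_{x^*} \circ \pi$.
\end{enumerate}
Transitivity of the partial order on $\C(T \times E, R)$ concatenates these into $\underline{\alpha} \circ \|\cdot\|_{x^*} \circ \flow \To \overline{\alpha} \circ \|\cdot\|_{x^*} \circ \pi$. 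Post-whiskering by the order-preserving $\underline{\alpha}^{-1}$ and using $\underline{\alpha}^{-1} \circ \underline{\alpha} = \id_R$ yields the desired inequality $\|\cdot\|_{x^*} \circ \flow \To \alpha \circ \|\cdot\|_{x^*} \circ \pi$, matching \cref{def:stable}.

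The only subtlety is bookkeeping rather than substance: the pre-whiskerings in steps (1) and (3) are justified by the left-whiskering clause of \cref{def:posetal}, while the post-whiskering by $\underline{\alpha}^{-1}$ relies on its being order-preserving (part of the definition of class $\K$). I would present the argument as a single pasting diagram whose three unfilled 2-cells are precisely the positive definite bounds and the decrescent square, with the class $\K$ wedges $\overline{\alpha}$ and $\underline{\alpha}$ appearing on the interior, and $\underline{\alpha}^{-1}$ post-composed to cancel against $\underline{\alpha}$ via the unit triangle. There is no real obstacle; the proof is purely diagrammatic chasing, and the main care point is ensuring that the class $\K$ bounds are applied to $\flow$ and $\pi$ on the correct sides so that $\underline{\alpha}$ appears below the stronger (lower) term and $\overline{\alpha}$ above the weaker (upper) one, so that inversion produces a genuine upper bound in the end.
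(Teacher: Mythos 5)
Your proof is correct and follows essentially the same route as the paper's: both take $\alpha = \underline{\alpha}^{-1} \circ \overline{\alpha}$, chain the positive-definite bounds (whiskered by $\flow$ and $\pi$ respectively) with the decrescent 2-cell, and cancel $\underline{\alpha}^{-1}\circ\underline{\alpha}$, assembled as a single pasting diagram. The only cosmetic discrepancy is that you write your 2-cells as $f \To g$ for $f \leq g$, whereas the paper's convention from \cref{def:posetal} is $g_1 \To g_2$ for $g_1 \geq g_2$; the underlying inequalities are identical.
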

\begin{proof}
    Since we assume $V$ to be positive definite, by Definition \ref{def:positivedef}, there exist class $\K$ functions $\overline \alpha, \underline \alpha \maps R \to R$ such that the following diagram lax commutes.
    \[\begin{tikzcd}
        &
        R
        \arrow[dr, "\overline \alpha"]
        \\
        E
        \arrow[ur, "\|\cdot\|_{x^*}"]
        \arrow[rr, "V"{name = V}, ""'{name = B}]
        \arrow[dr, "\|\cdot\|_{x^*}"']
        &&
        R
        \\&
        R
        \arrow[ur, "\underline \alpha"']
        \arrow[from = 1-2, to = V, Rightarrow, ""]
        \arrow[from = B, to = 3-2, Rightarrow, ""]
    \end{tikzcd}\]
    Therefore, we can construct the following lax commuting diagram: 
    \[
    \begin{tikzcd}
        &
        E
        \arrow[r, "\|\cdot\|_{x^*}"]
        \arrow[drr, bend right = 15, ""{name = A}, "V"description]
        \arrow[dd, Rightarrow]
        &
        R
        \arrow[dr, "\overline\alpha"]
        \\
        T \times E
        \arrow[ur, "\pi_E"]
        \arrow[dr, "\flow"']
        &&&
        R
        \arrow[dr, "\underline\alpha\inv"]
        \\&
        E
        \arrow[r, "\|\cdot\|_{x^*}"']
        \arrow[urr, bend left = 15, ""'{name = B}, "V"description]
        &
        R
        \arrow[ur, "\underline \alpha"']
        \arrow[rr, "id"', ""{name = C}]
        &&
        R
        \arrow[from = 1-3, to = A, Rightarrow]
        \arrow[from = B, to = 3-3, Rightarrow]
    \end{tikzcd}
    \]
    It follows that $x^*$ is stable, with the corresponding class $\K$ function being $\underline \alpha\inv \circ \overline \alpha$.
\end{proof}

\subsection{Converse Lyapunov Theorem}

The generalized Lyapunov theorem of the previous section holds under very weak assumptions. The converse requires some assumptions on the posetal object $R$: the existence and property of suprema of certain maps valued in $R$. The construction of a Lyapunov morphism for a stable equilibrium point is essentially given by defining $V(x)$ to be the largest distance from the stable point achieved by $\flow(t,x)$. Here we encode this idea categorically in a point-free manner. 

For the definitions below, assume a setting for stability in $\C$. 

\begin{defn}
\label{def:upperboundAoff}
    For $A,B \in \C$, an \define{upper-bound for $A$ of} $f \maps A \times B \to R$ is a morphism $\alpha \maps B\to R$ such that the following diagram lax commutes.
    \[\begin{tikzcd}
        &
        B
        \ar[dr, "\alpha"]
        \\
        A\times B
        \ar[rr, ""{name=U, above}, "f"']
        \ar[ur, "\pi_B"]
        &&
        R
        \ar[to=U, Rightarrow, from=1-2]
    \end{tikzcd}
    \exmath{f(a,b) \leq \alpha(b)}
    \]
\end{defn}

\begin{defn}
\label{def:localsuprema}
    We say that $R$ has \define{local suprema} if whenever $f \maps A\times B\to R$ has an upper-bound, then there is a lowest upper-bound $\sup_Af \maps B \to R$. That is, for any upper-bound $\beta \maps B\to R$, we have
    \[
    \begin{array}{c}
    \begin{tikzcd}
        &
        B
        \ar[dr, "\beta"]
        \\
        A\times B
        \ar[rr, ""{name=U, above}, "f"']
        \ar[ur, "\pi_B"]
        &&
        R
        \ar[to=U, Rightarrow, from=1-2]
    \end{tikzcd} \\
    \excenter{f(a,b) \leq \beta(b)}
    \end{array}
    \quad \text{implies}\quad
        \begin{array}{c}
    \begin{tikzcd}
        &
        B
        \ar[dr, "\sup_Af"', sloped, ""{name = B}]
        \ar[dr, "\beta", bend left = 70, ""'{name = A}]
        \\
        A\times B
        \ar[rr, ""{name=U, above}, "f"']
        \ar[ur, "\pi_B"]
        &&
        R
        \ar[to=U, Rightarrow, from=1-2]
        \arrow[from = A, to = B, Rightarrow]
    \end{tikzcd} \\
    \excenter{f(a,b) \leq \sup_Af(b) \leq  \beta(b)}
    \end{array}
    \]
\end{defn}

\begin{defn}
    We say the \textbf{supremum commutes with whiskering} if: 
    \[
    (\sup_A f) \circ r = \sup_A (f \circ (\id_A \times r))
    \]
    for all $r \maps C \to B$. That is, in the following diagram the composite of the top line is a supremum of the composite of the bottom line.
    \[
    \begin{array}{c} 
    \begin{tikzcd}
    	C 
        & 
        B 
        \\
    	{A \times C} 
        & 
        {A\times B} 
        & 
        R
    	\arrow[""{name=0, anchor=center, inner sep=0}, "r", from=1-1, to=1-2]
    	\arrow[""{name=1, anchor=center, inner sep=0}, "\sup_Af", bend left, from=1-2, to=2-3]
    	\arrow[from=2-1, to=1-1, "\pi_C"]
    	\arrow[""{name=2, anchor=center, inner sep=0}, "id \times r"', from=2-1, to=2-2]
    	\arrow["{\pi_B}", from=2-2, to=1-2]
    	\arrow[""{name=3, anchor=center, inner sep=0}, "f"', from=2-2, to=2-3]
    	\arrow[shorten <=4pt, shorten >=4pt, Rightarrow, from=0, to=2]
    	\arrow[shorten <=3pt, shorten >=3pt, Rightarrow, from=1, to=3]
    \end{tikzcd}\\
    \excenter{\sup_A f(r(c)) = \sup_A f(a,r(c))}
    \end{array}
    \]
\end{defn}

The converse conditions presented here were developed in collaboration with S\'ebastien Mattenet building upon \cite{MattenetJungers}. This collaboration led to the follow-on paper \cite{CLT2}.

\begin{thm}[Converse categorical Lyapunov theorem]
\label{thm:Lyapconv}
    Assume a setting for stability such that $R$ has local suprema commuting with whiskering. Let $\flow \maps T \times E \to E$ be a $T$-flow, and $x^* \maps 1 \to E$ an equilibrium point. If $x^*$ is a stable equilibrium point then there exists a Lyapunov morphism $V$ for $\flow$.
\end{thm}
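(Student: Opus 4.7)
The plan is to construct $V$ as the categorical analogue of the Massera-type formula $V(x) = \sup_{t \in T}\xnorm{\flow(t,x)}$, realized via the local-suprema hypothesis on $R$. Concretely, set $g \coloneq \norm \circ \flow \maps T \times E \to R$. Stability of $x^*$ (\cref{def:stable}) is precisely the statement that $\alpha \circ \norm \maps E \to R$ is an upper-bound for $T$ of $g$ in the sense of \cref{def:upperboundAoff}, so local suprema produce
\[
V \;\coloneq\; \sup_T g \;\maps E \to R,
\]
together with two inequalities: $g \To V \circ \pi_E$ (upper-bound property) and $V \To \alpha \circ \norm$ (least upper-bound property).

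The positive-definite condition of \cref{def:lyapunov} then falls out almost for free. For the upper bound take $\overline\alpha = \alpha$, which is class $\K$ by hypothesis. For the lower bound take $\underline\alpha = \id_R$: to verify $\norm \To V$, I precompose $g \To V \circ \pi_E$ with the morphism $\langle 0_\oplus \circ !, \id_E\rangle \maps E \to T \times E$; the initialization axiom $\flow \circ \langle 0_\oplus \circ !, \id_E\rangle = \id_E$ collapses the left-hand side to $\norm$ and the right-hand side to $V$.

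The decrescent condition is the main step, and uses all three assumptions on $R$. Applying sup-commuting-with-whiskering to $r = \flow$ gives
\[
V \circ \flow \;=\; (\sup_T g) \circ \flow \;=\; \sup_T\bigl(g \circ (\id_T \times \flow)\bigr),
\]
and the flow-composition axiom $\flow \circ (\id_T \times \flow) = \flow \circ (\oplus \times \id_E)$ rewrites this as $\sup_T\bigl(g \circ (\oplus \times \id_E)\bigr)$ as maps $T \times E \to R$. It remains to exhibit $V \circ \pi_E \maps T \times E \to R$ as an upper bound, over the outer copy of $T$, of $g \circ (\oplus \times \id_E)$: precomposing $g \To V \circ \pi_E$ with $\oplus \times \id_E$ yields $g \circ (\oplus \times \id_E) \To V \circ \pi_E \circ (\oplus \times \id_E)$, and the right-hand side factors through the projection $T \times T \times E \to T \times E$ onto the outer coordinates. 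The least-upper-bound property of $\sup_T$ then delivers $V \circ \flow \To V \circ \pi_E$.

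The main obstacle is the bookkeeping between the two copies of $T$ appearing in the decrescent step: the outer $T$ that indexes the supremum versus the inner $T$ being fed into the flow. This mirrors the classical calculation $\sup_s \xnorm{\flow(s+t,x)} \leq \sup_u \xnorm{\flow(u,x)}$, but the categorical version requires one to track carefully which projection and which copy of $T$ appears at each stage; once the indexing is fixed, every inequality is a formal consequence of the universal property of $\sup_T$ together with the initialization and composition axioms of the flow.
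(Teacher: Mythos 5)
Your proposal is correct and follows essentially the same route as the paper's proof: you define $V = \sup_T(\norm\circ\flow)$ using the stability bound $\alpha\circ\norm$ as the required upper bound, obtain positive definiteness with $\overline\alpha=\alpha$ and $\underline\alpha=\id_R$ by whiskering with $0_\oplus\times\id_E$ and invoking initialization, and derive the decrescent condition exactly as the paper does via sup-commuting-with-whiskering, the composition axiom $\flow\circ(\id_T\times\flow)=\flow\circ(\oplus\times\id_E)$, and minimality of the supremum. The only cosmetic discrepancy is that you write $\To$ for $\leq$ while the paper's convention is $g_1\To g_2$ iff $g_1\geq g_2$; the mathematical content is unaffected.
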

\begin{proof}
Given a stable equilibrium point $x^* \maps 1 \to E$, the diagram lax commuting: 
\[\begin{tikzcd}
     T\times E 
     \ar[rr, "\flow"]
     \ar[d, "\pi_E"', ""{name=0, anchor=center, inner sep=3}] 
     && 
     E
     \ar[d, "\norm", ""{name=1, anchor=center, inner sep=0}]
     \\
     E
     \ar[urr, Rightarrow]
     \ar[r, "\norm"']
     &
     R
     \ar[r, "\alpha"']
     &
     R
\end{tikzcd}\] 
tells us that $\xnorm{\flow}$ is upper-bounded by $\alpha(\norm)$, and so we get a map:
\[
V \coloneq \sup_T\xnorm{\flow} \qquad \mathrm{with} \qquad 
\begin{tikzcd}
    &
    E
    \ar[dr, "V"description, ""{name = B}]
    \ar[dr, "\alpha(\norm)", bend left = 70, ""'{name = A}]
    \\
    T \times E
    \ar[rr, ""{name=U, above}, "\xnorm \flow"']
    \ar[ur, "\pi_E"]
    &&
    R
    \ar[to=U, Rightarrow, from=1-2]
    \arrow[from = A, to = B, Rightarrow]
\end{tikzcd}.
\]
We conclude with a few whiskering operations. First we whisker by $0_\oplus\in T$:
\[\begin{tikzcd}        
    &&
    E
    \ar[ddr, "\alpha(\norm)"]
    \ar[d, Rightarrow]
    \\&&
    E
    \ar[dr, "V"description]
    \\
    E
    \ar[r, "0_\oplus\times\id_E"]
    &
    T\times E
    \ar[rr, "\xnorm{\flow}"', ""{name=U, above}]
    \ar[ur, "\pi_E"']
    \ar[uur, "\pi_E"]
    &&
    R
    \ar[to=U, Rightarrow, from=2-3]
\end{tikzcd}\] 
This gives us that $V$ is positive definite, since 
\[
\pi_E \circ (0_\oplus \times \id_E) = \id_E
\qquad \mathrm{and} \qquad
\flow \circ (0_\oplus \times \id_E) = \id_E. 
\]
That is, the result is the following lax commuting diagram. 
\[\begin{tikzcd}
     &
     R
     \ar[dr, "\alpha"]
     \\
     E
     \ar[ur, "\norm"]
     \ar[rr, "V"{description}, ""{name=0, anchor=center, inner sep=0}]
     \ar[dr, "\norm"']
     &&
     R
     \\&
     R
     \ar[ur, "\beta= \id_R"']
     \ar[Rightarrow, from=1-2, to=0, shorten =3pt]
     \ar[Rightarrow, from=0, to=3-2, shorten =3pt]
\end{tikzcd}\]

It remains to show that $V$ decreases along the flow. Since the supremum commutes with whiskering, $V \circ \flow$ is the supremum of $\norm \circ \flow \circ (\id_T \times \flow)$ over $T$, or equivalently the supremum of $\norm \circ \flow\circ (\oplus \times \id)$ over $T$. That is, the following lax commutes.
\[\begin{tikzcd}
    T \times E 
    \ar[r, "\flow"]
    & 
    E 
    \ar[drr, "V", ""{name=v, anchor=center}, bend left = 20]
    \\
    T \times T \times E
    \ar[u, "\pi_{2, 3}"]
    \ar[r, "\id\times \flow"']
    \arrow[dr, "\oplus \times \id_E"']
    & 
    {T \times E} 
    \ar[u, "\pi_{E}"]
    \ar[r, "\flow"]
    &
    E
    \ar[r, "\norm"']
    & 
    R
    \arrow[from = v, to = 2-3, Rightarrow]
    \\&
    T \times E
    \arrow[ur, "\flow"']
\end{tikzcd}\]

The inequality $V(\pi_E)\geq \xnorm{\flow}$ can be whiskered by $\oplus \times \id_E$:
\[\begin{tikzcd}
    &
    T \times E
    \ar[r, "\pi_E"]
    &
    E
    \ar[dr, "V"]
    \ar[d, Rightarrow]
    \\
    T\times T\times E
    \ar[ur, "\pi_{2,3}"]
    \ar[r, "\oplus \times \id_E"']
    &
    T\times E
    \ar[ur, "\pi_E"]
    \ar[r, "\flow"']
    &
    E
    \ar[r, "\norm"']
    &
    R
\end{tikzcd}\]
This shows that $V\circ \pi_E$ is an upper bound of $\norm \circ \flow \circ (\oplus \times \id)$. By minimality of $\sup_T\norm\circ \flow\circ (\oplus\times \id_E)=V\circ \flow$, we have
    \[\begin{tikzcd}
        T\times E 
        \ar[r, "\flow"]
        \ar[d, "\pi_E"', ""{name=0, anchor=center, inner sep=3}]
        &
        E
        \ar[d, "V", ""{name=1, anchor=center, inner sep=0}]
        \\
        E
        \ar[ur, Rightarrow]
        \ar[r, "V"']
        &
        \Rplus 
    \end{tikzcd}\]
    giving the desired inequality for $V$ to be decrescent.
\end{proof}

\section{Examples of Settings for Stability}
\label{sec:examples}

We illustrate constructions of settings for stability via a few key examples. 

\subsection{Classical Stability}
\label{sec:classicalstability}

Consider an ODE $\dot{x} = f(x)$ with $x \in E$ and $E$ and open subset of $\mathbb{R}^n$. Under suitable assumptions, the solution to the ODE is given by a flow:
\begin{eqnarray}
    \label{eqn:flow}
    \flow \maps \mathbb{R}_{\geq 0} \times E  \to E
\end{eqnarray}
satisfying $\dot{\flow}_t(x) = f(\flow_t(x))$, where $x \in E$ is the initial condition. 
This leads to a setting for stability as considered in the introduction, where
\begin{enumerate}
\setcounter{enumi}{-1}
    \item[\textbf{S0:}] (setting) the category $\Set$, with terminal object $1 = \{*\}$. 
    \item[\textbf{S1:}] (space) the space of interest is a set $E \subset \mathbb{R}^n \in \Set$. 
    \item[\textbf{S2:}] (time) the object $\mathbb{R}_{\geq 0}$ is a monoid with $+$ addition and $0_\oplus = 0$. 
    \item[\textbf{S3:}] (stable object) the set $\mathbb{R}_{\geq 0}$ of the non-negative reals, with  
    \begin{itemize}
        \item distinguished point: $0 \in \Rplus$. 
        \item posetal structure: $f \To g$, if $f(x) \geq g(x)$ for all $x \in E$. 
    \end{itemize} 
     \item[\textbf{S4:}] (distance) the morphism:
    \begin{eqnarray}
        d \maps E \times E  & \to &  \Rplus \\
        (x,y) & \mapsto &  \| x - y \| \nonumber
    \end{eqnarray}
    where $\|\cdot\|$ is any (classical) norm in $\mathbb{R}^n$. As a result, the diagram \eqref{eqn:distanceaxiom} lax commutes since: 
    \begin{itemize}
        \item $d(x,y) \geq 0$
        \item  $d(x,y) = 0$ iff $x = y$
    \end{itemize}  
\end{enumerate}
    
Per this setting for stability, we can ``do Lyapunov theory'' per the constructions in \cref{sec:setting}. 

\paragraph{\textbf{Norm.}}  The distance yields yields the (categorical) norm per Definition \ref{def:norm}: 
\[
\|\cdot\|_{x^*} \coloneqq \| (~\cdot~) - x^*\| \maps E \to R
\]
which satisfies: 
\begin{itemize}
    \item $ \|\cdot\|_{x^*} \To 0$ since $\| x \|_{x^*} \geq 0$,
    \item  $\ker(\|\cdot\|_{x^*}) \cong \{x^*\}$ since $\| x \| = 0$ iff  $x = x^*$
\end{itemize}  

\paragraph{\textbf{Class $\K$ Functions.}}  Per Definition \ref{def:classKmorphism}, a class $K$ morphism $\alpha: \Rplus \to \Rplus$ satisfies: 
\begin{itemize}
    \item $\alpha$ is strictly increasing on $\Rplus$: $r_1 < r_2$ implies that $\alpha(r_1) < \alpha(r_2)$, 
    \item $\alpha$ maps $0 \in \Rplus$ to $0$. 
\end{itemize}
therefore, class $\K$ morphisms in this setting are class $\K_{\infty}$ functions as traditionally defined. 

\paragraph{\textbf{Positive Definite Functions.}}  Per Definition \ref{def:positivedef}, $V \maps E \to \Rplus$ is positive definite if there exists class $\K$ maps $\underline{\alpha},\overline{\alpha}$ such that:  
\[
\underline{\alpha}(\| x \|_{x^*} ) \leq V(x) \leq 
\overline{\alpha}( \| x \|_{x^*} )
\]
It is well-known that this is equivalent to $V$ being positive definite in the usual sense. 

\paragraph{\textbf{Flows.}} Flows per Definition \ref{def:flow} are classical flows \eqref{eqn:flow}. The fact that $\flow \maps \mathbb{R}_{\geq 0} \times E  \to E$ is an action of $\Rplus$ on $E$ implies that the following is satisfied: 
\begin{eqnarray}
    \textit{Initialization:} &  & \flow_{0}(x) = x  \nonumber\\
    \textit{Composition:} &  &   
    \nonumber \flow_{t_1} (\flow_{t_2}(x)) = \flow_{t_1 + t_2}(x) 
\end{eqnarray}

\paragraph{\textbf{Stability.}}  An equilibrium point per Definition \ref{def:equlibriumpoint} simply satisfies: $\flow_t(x^*) = x^*$. This equilibrium point is stable, per Definition \ref{def:stable} if: 
\[
\| \flow_t(x) \|_{x^*} \leq \alpha(\| x \|_{x^*})
\]
for a class $\K$ map $\alpha$. Note that this agrees with the ``classical'' notion of stability:

\begin{lem}
\label{lem:classicalstability}
    If $x^*$ is stable in the sense of \cref{def:stable}, then $x^*$ is stable in the classical sense\footnote{These are actually equivalent, but we only provide the proof for the direction stated above.}, i.e., for all $\epsilon > 0$, there exists a $\delta > 0$ such that: 
    \[
    \| x - x^* \| < \delta \qquad \mathrm{implies} \qquad 
    \| \flow_t(x) - x^* \| < \epsilon  \quad \forall t \in \Rplus
    \]
\end{lem}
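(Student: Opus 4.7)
The plan is to unwind what categorical stability says in this concrete setting and then directly construct the required $\delta$ from the class $\K$ inverse of $\alpha$. By \cref{def:stable} translated into $\Set$, categorical stability supplies a class $\K$ morphism $\alpha \maps \Rplus \to \Rplus$ such that $\|\flow_t(x) - x^*\| \leq \alpha(\|x - x^*\|)$ for every $t \in \Rplus$ and every $x \in E$. From the discussion of class $\K$ functions in this setting, $\alpha$ is strictly increasing with $\alpha(0) = 0$ and admits an order-preserving (in particular, strictly increasing) inverse $\alpha^{-1}$.

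Given $\epsilon > 0$, I would set $\delta \coloneq \alpha^{-1}(\epsilon)$; note $\delta > 0$ because $\alpha^{-1}(0) = 0$ and $\alpha^{-1}$ is strictly increasing. Then for any $x$ with $\|x - x^*\| < \delta$, strict monotonicity of $\alpha$ yields $\alpha(\|x - x^*\|) < \alpha(\delta) = \epsilon$. Chaining with the categorical stability inequality gives
\[
\|\flow_t(x) - x^*\| \leq \alpha(\|x - x^*\|) < \epsilon
\]
for every $t \in \Rplus$, which is precisely the classical $\epsilon$-$\delta$ stability condition.

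The only subtle point is reconciling the non-strict inequality $\leq$ that appears in the categorical formulation with the strict inequality $<$ required by the classical definition; this is handled for free by strict monotonicity of $\alpha$, which converts the strict inequality on the input $\|x - x^*\| < \delta$ into a strict inequality on $\alpha(\|x - x^*\|) < \epsilon$, leaving the ``$\leq$ then $<$'' chain valid. There is no real obstacle here, since everything boils down to translating the categorical norm and class $\K$ morphism into their classical meanings and applying the inverse of $\alpha$; the lemma is essentially a sanity check that the categorical axiomatization is consistent with the traditional definition.
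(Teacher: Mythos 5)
Your proof is correct and follows essentially the same route as the paper: choose $\delta = \alpha^{-1}(\epsilon)$ and chain the stability inequality with monotonicity of $\alpha$. If anything, you are slightly more careful than the paper's own proof about matching the strict inequalities in the classical definition (the paper's argument silently passes from $<\delta$ to $\leq\delta$ and concludes $\leq\epsilon$), but the substance is identical.
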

\begin{proof}
    For a given $\epsilon > 0$, let $\delta = \alpha^{-1}(\epsilon)$. 
    If $\| x - x^* \|  = \| x\|_{x^*} \leq \delta$, then we have 
    \begin{align*}
       \| \flow_t(x) - x^* \| =   \| \flow_t(x) \|_{x^*} \leq \alpha(\| x \|_{x^*}) \leq \alpha(\delta) = \alpha(\alpha^{-1}(\epsilon)) = \epsilon.
    \end{align*}
\end{proof}

\paragraph{\textbf{Lyapunov's Theorem.}} 
A Lyapunov morphism in this setting is a positive definite function such that $V(\flow_t(x)) \leq V(x)$. \cref{thm:Lyap} along with \cref{lem:classicalstability} implies that this condition implies stability, i.e., we recover \eqref{eqn:stabilitydiagrammotivation}. 
Moreover, it will be seen in Proposition \ref{prop:lyapunov} that when $\flow_t$ is the solution of an ODE we recover the classic Lypaunov conditions given in \eqref{eqn:lyapunov_condition}.

\paragraph{\textbf{Converse Lyapunov Theorem.}}
A function $f \maps A \times B \to \Rplus$ having an upper bound on $A$ in the sense of \cref{def:upperboundAoff} means that for each fixed $b \in B$, there is a constant $\alpha(b) \in \Rplus$ which bounds the function $f(-,b) \maps A \to \Rplus$ from above. A supremum of such a function would be a function $\sup_Af \maps B \to \Rplus$ with the property that $f(a,b) \leq \sup_Af(b)$ for all $b \in B$, and if $\beta \maps B \to \Rplus$ is an upper-bound for $A$ of $f$, then $\sup_Af(b) \leq \beta(b)$ for all $b \in B$. This obviously coincides with the traditional definition of the supremum. The condition of asking the function $f$ to have an upper-bound for $A$ of $f$ guarantees that as a function, $\sup_Af$ achieves only finite values, legitimizing the codomain of $\Rplus$. Thus, $\Rplus$ has local suprema. That these suprema commute with whiskering is trivial.

Let $\flow \maps T \times E \to E$ be a flow and $x^*$ be a stable equilibrium point. The proof of \cref{thm:Lyapconv} gives a specific construction of a Lyapunov function for $x^*$. The function $\xnorm\flow$ gets an upper-bound for $T$ directly from the stability of $x^*$. By \cref{def:stable}, there exists a class $K$ function $\alpha \maps \Rplus \to \Rplus$ such that $\xnorm{\flow_t(x_0)} \leq \alpha(\xnorm{x_0})$. So $\alpha \circ \norm$ is the upper-bound for $A$ of $\xnorm\flow$. Therefore, for all $x_0 \in E$, the function $\xnorm{\flow_-(x_0)} \maps T \to \Rplus$ is bounded above by the constant $\alpha(\xnorm{x_0})$. We can thus define the Lyapunov function $V$ to be the supremum \[V \coloneq \sup_{T} \xnorm\flow \maps E \to \Rplus.\]
which gives the pointwise formula:
\[V(x) \coloneq \sup_{t \in T} \xnorm{\flow_t(x)}.
\]

\subsection{Discrete-Time Stability}
\label{sec:discrete-time}

Consider a discrete-time dynamical system $x_{k+1} = F(x_k)$ with $x_k \in E \subset \mathbb{R}^n$ and $F \maps E \to E$. This results in a flow: 
\begin{eqnarray}
\label{eqn:discretetime}
    \flow \maps \mathbb{Z}_{\geq 0} \times E & \to & E \\
    (k,x) & \mapsto & F^k(x) \nonumber
\end{eqnarray}
Here, $F^0(x) = x$ (the initial condition), and $F^k(x) = F(F(\ldots (F(x)\ldots)))$ application of $F$ $k$ times, i.e., $F^0 = id$, $F^1 = F$, $F^2 = F \circ F$, $F^k = F \circ F^{k-1}$. This leads to a setting for stability defined by: 
\begin{enumerate}
\setcounter{enumi}{-1}
    \item[\textbf{S0:}] (setting) the category $\Set$, with terminal object $1 = \{*\}$. 
    \item[\textbf{S1:}] (space)  the space of interest is the set $E \subset \mathbb{R}^n \in \Set$.
    \item[\textbf{S2:}] (time) the object $\mathbb{Z}_{\geq 0}$ is a monoid with $+$ addition and $0_\oplus = 0$. 
    \item[\textbf{S3:}] (stable object) the set $\mathbb{R}_{\geq 0}$ of the non-negative reals as with the continuous-time case.
    \item[\textbf{S4:}] (distance) the morphism $d \maps E \times E \to R$ as in the continuous time case, induced by the Euclidean metric. 
\end{enumerate}

The constructions in this case are the same as for the continuous-time case, except the \textbf{flow} is now given as in \eqref{eqn:discretetime}. This implies that an \textbf{equilibrium point} $x^*$ is a fixed point: $F(x^*) = x^*$. As a result, $x^*$ is \textbf{stable} if: 
\[
\| \flow_k(x) \|_{x^*} = \| F^k(x) \|_{x^*}  \leq \alpha(\| x \|_{x^*}). 
\]
From this, we recover the classical notion of discrete-time stability, with the proof the same as Lemma \ref{lem:classicalstability}. 

\begin{lem}
\label{lem:discretetimestability}
    If $x^*$ is stable, then for all $\epsilon > 0$, there exists a $\delta > 0$ such that: 
        \[
    \| x - x^* \| < \delta \qquad \mathrm{implies} \qquad 
    \| F^k(x) - x^* \| < \epsilon  \quad \forall k \in \mathbb{Z}_{\geq 0}
    \]
\end{lem}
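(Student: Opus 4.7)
The plan is to mimic the proof of Lemma \ref{lem:classicalstability} almost verbatim, replacing the continuous time parameter $t \in \Rplus$ with the discrete time parameter $k \in \Zplus$. By Definition \ref{def:stable}, the categorical stability of $x^*$ in this discrete-time setting unpacks to the existence of a class $\K$ function $\alpha \maps \Rplus \to \Rplus$ such that $\|\flow_k(x)\|_{x^*} \leq \alpha(\|x\|_{x^*})$ for all $k \in \Zplus$ and all $x \in E$. Since $\flow_k = F^k$ by the identification \eqref{eqn:discretetime}, this reads $\|F^k(x) - x^*\| \leq \alpha(\|x - x^*\|)$.

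Given $\epsilon > 0$, I would set $\delta \coloneq \alpha^{-1}(\epsilon)$. Because $\alpha$ is class $\K$, it admits an inverse and is strictly order-preserving, so $\delta > 0$ is well-defined. For any $x \in E$ with $\|x - x^*\| < \delta$, the chain of inequalities
\[
\|F^k(x) - x^*\| = \|\flow_k(x)\|_{x^*} \leq \alpha(\|x\|_{x^*}) < \alpha(\delta) = \alpha(\alpha^{-1}(\epsilon)) = \epsilon
\]
then holds uniformly in $k \in \Zplus$, which is exactly the desired classical discrete-time stability estimate.

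There is no real obstacle here. Unlike the continuous-time case, no appeal to existence, uniqueness, or continuity of solutions is required: the iterates $F^k$ are defined tautologically as finite compositions, and the monoid action axioms (initialization and composition in Definition \ref{def:flow}) simply become $F^0 = \id_E$ and $F^{k_1+k_2} = F^{k_1} \circ F^{k_2}$. The entire proof therefore reduces to the single inequality chain driven by the class $\K$ structure of $\alpha$ provided by the categorical notion of stability.
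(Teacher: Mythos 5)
Your proof is correct and matches the paper's approach exactly: the paper gives no separate argument for this lemma, stating only that ``the proof [is] the same as Lemma~\ref{lem:classicalstability},'' which is precisely the substitution $t \mapsto k$ you carry out. Your additional remark that strict monotonicity of the class $\K$ map $\alpha$ justifies the strict inequality $\alpha(\|x\|_{x^*}) < \alpha(\delta)$ is a small but welcome refinement over the paper's own (non-strict) chain in Lemma~\ref{lem:classicalstability}.
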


\paragraph{\textbf{Lyapunov's Theorem.}}  A Lyapunov morphism in this case satisfies: 
\begin{enumerate}
    \item $\underline{\alpha}(\| x \|_{x^*} ) \leq V(x) \leq \overline{\alpha}( \| x \|_{x^*} )$
    \item $V(\flow_k(x)) \leq V(x)$. 
\end{enumerate}
Theorem \ref{thm:Lyap} implies that, in that case, $x^*$ is a stable equilibrium point. The second condition (for $k = 1$) gives the classic Lyapunov condition on a discrete-time dynamical system: 
\[
\nabla V(x) \coloneq V(F(x)) - V(x) \leq 0. 
\]
Therefore, we recover the classic Lyapunov conditions for discrete-time dynamical systems from Theorem \ref{thm:Lyap}

\paragraph{\textbf{Converse Lyapunov Theorem.}} The stable object and ambient category are the same as in \cref{sec:classicalstability}, therefore the converse also holds.

\begin{prop}
\label{prop:discretetimelyap}
    For the discrete-time dynamical system $x_{k+1} = F(x_k)$ with $x_k \in E \subset \mathbb{R}^n$, and equilibrium point $x^*$, i.e., a fixed point $F(x^*) = x^*$ is stable if and only if there exists a function $V \maps E \to \Rplus$ satisfying: 
    \begin{enumerate}[(i)]
    \item $V$ is positive definite: $V(x) \geq 0$ and $V(x) = 0$ iff $x = x^*$. 
    \item $\nabla V(x)$ is negative semi-definite: $\nabla V(x) = V(F(x)) - V(x) \leq 0$. 
\end{enumerate}
\end{prop}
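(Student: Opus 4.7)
The plan is to deduce this proposition as a direct specialization of \cref{thm:Lyap} and \cref{thm:Lyapconv} to the discrete-time setting for stability laid out earlier in \cref{sec:discrete-time}. All of the axioms \textbf{S0}--\textbf{S4} have already been checked in that section, so what remains is to translate the categorical hypotheses and conclusions into the pointwise conditions (i) and (ii).

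First I would note that positive definiteness of $V$ in the sense of \cref{def:positivedef} is equivalent to condition (i): the existence of class $\K$ bounds $\underline{\alpha}(\|x\|_{x^*}) \leq V(x) \leq \overline{\alpha}(\|x\|_{x^*})$ is a standard reformulation of $V \geq 0$ with $V(x) = 0$ iff $x = x^*$, as recorded in \cref{sec:classicalstability} and reused in \cref{sec:discrete-time}. Next, I would translate the decrescent condition $V(\flow_k(x)) \leq V(x)$ for all $k \in \Zplus$ into condition (ii). Taking $k = 1$ immediately yields $V(F(x)) \leq V(x)$, i.e.\ $\nabla V(x) \leq 0$. Conversely, assuming $\nabla V \leq 0$, a short induction on $k$ shows $V(F^{k+1}(x)) = V(F(F^k(x))) \leq V(F^k(x)) \leq V(x)$, so the decrescent diagram of \cref{def:lyapunov} lax commutes.

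With these two translations in hand, the forward direction ($V$ exists $\Rightarrow$ $x^*$ stable) is \cref{thm:Lyap} composed with \cref{lem:discretetimestability}, and the converse ($x^*$ stable $\Rightarrow$ $V$ exists) is \cref{thm:Lyapconv}. To invoke the converse, I would observe (as already argued for the continuous-time case in \cref{sec:classicalstability} and explicitly inherited in \cref{sec:discrete-time}) that $\Rplus$ has local suprema commuting with whiskering, since these are just ordinary pointwise suprema of real-valued functions that are bounded above by $\alpha(\|\cdot\|_{x^*})$. The explicit Lyapunov function produced by the converse is then $V(x) = \sup_{k \in \Zplus} \|F^k(x)\|_{x^*}$.

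The only step that requires any real care is the translation of the decrescent condition; all other arguments are routine applications of results established earlier in the paper. Even that translation is elementary, so I do not anticipate a genuine obstacle---this proposition should read as a short corollary that simply packages the content of \cref{thm:Lyap} and \cref{thm:Lyapconv} in the familiar discrete-time language, paralleling the analogous discussion in \cref{sec:classicalstability}.
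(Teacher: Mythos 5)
Your proposal is correct and follows essentially the same route as the paper, which states this proposition without a separate proof precisely because it is the packaging of \cref{thm:Lyap} and \cref{thm:Lyapconv} in the discrete-time setting of \cref{sec:discrete-time}, together with the $k=1$ specialization of the decrescent condition and the observation that $\Rplus$ has local suprema commuting with whiskering. Your added induction recovering $V(F^k(x)) \leq V(x)$ from $\nabla V \leq 0$ is the one translation step the paper leaves implicit, and you supply it correctly.
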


\subsection{Smooth Manifolds}
\label{sec:smoothmanifolds}

Consider the category $\Man$ of smooth manifolds with boundary and smooth maps\footnote{The construction of the Lyapunov function given in our proof of the converse does not produce a smooth map, but this map can be smoothed to yield the desired converse result.}. As addition is smooth, the space $\Rplus$ is a monoid object in $\Man$, i.e., a \define{Lie monoid} \cite{Grad23}. Its actions $\flow \maps \Rplus \times M \to M$ are precisely smooth global flows on the manifold $M$. 

\begin{enumerate}
\setcounter{enumi}{-1}
    \item[\textbf{S0:}] (setting) $\C = \Man$ is the category of smooth manifolds with boundary and smooth maps.
    \item[\textbf{S1:}] (space) a smooth manifold $M$ with empty boundary.
    \item[\textbf{S2:}] (time) $\Rplus$ with monoid structure given by addition $+$ and unit $0$.
    \item[\textbf{S3:}] (stable object) $\R$ with:
            \begin{itemize}
            \item point $0 \in \R$. 
            \item postal structure: $f \To g$, if $f(x) \geq g(x)$ for all $x \in M$. 
            \end{itemize}
    \item[\textbf{S4:}] (distance) It is well-known that every smooth manifold admits a Riemannian metric $g_p \maps T_p M \times T_p M \to \R$, this Riemannian metric can be used to define an ordinary metric by taking the infimum of lengths of curves connecting points \cite{abraham2012manifolds}:
    \begin{eqnarray}
        d \maps M \times M  & \to &  \Rplus \\
        (x,y) & \mapsto &  \inf\{ L(\gamma) \maps \gamma \textrm{~admissible with}~ \gamma(0) = x ,~ \gamma(1) = y\}. 
        \nonumber
    \end{eqnarray}
    where an admissible curve $\gamma \maps [0,1] \to M$ is a differentiable curve such that $\dot{\gamma}(t) \in T_{\gamma(t)} M $, and $L(\gamma)$ is the length: 
    \[
    L(\gamma) = \int_{0}^{1} \sqrt{g_{\gamma(t)}(\dot{\gamma}(t),\dot{\gamma}(t))} dt. 
    \]
    The topology induced by this metric agrees with the original topology of the manifold. 
\end{enumerate}

In this setting, a flow is precisely a smooth global flow on $M$, equilibrium points are what one would expect, and
class $\K$ morphisms in this setting are class $\K$ functions which are additionally smooth. Lyapunov morphisms can be characterized for vector fields as the following results indicates. 

\begin{prop}
\label{prop:lyapunov}
    Let $M$ be a manifold and $\flow\colon\Rplus\times M\to M$ a smooth action of $(\Rplus,0,+)$, and let $f\maps M\to TM$ be the vector field given by
    \[f(x)\coloneqq \frac{\partial\flow}{\partial t}\Big|_{(0,x)}=\lim_{t\to 0}\frac{\flow(t,x)}{t}.\] 
    For a positive definite function $V\maps M\to\Rplus$, the vector field formula $\vec{0}\geq TV\circ f$ holds iff we have $V(x)\geq V(\flow(t,x))$ for all $t,x$. That is, $V$ is a Lypaunov morphism (the square to the left lax commutes) iff $V$ is a Lyapunov function (the square to the right laxly commutes):
    \[
    \begin{tikzcd}
        \Rplus\times M\ar[r, "\flow"]\ar[d, "\pi"']&
        M\ar[r, "f"]\ar[d, "V"']&
        TM\ar[d, "TV"]\\
        M\ar[r, "V"']
        \arrow[ur, Rightarrow]
        &
        \Rplus\ar[r, "\vec{0}"']
        \arrow[ur, Rightarrow]
        &
        T\Rplus
    \end{tikzcd}
    \]
\end{prop}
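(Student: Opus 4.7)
My plan is to translate the two lax squares into familiar pointwise conditions and then establish the equivalence using the one-parameter group property of $\flow$ together with the chain rule.

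First I would unpack the two diagrams. The right square lax commutes iff $V(\flow(t,x)) \leq V(x)$ for all $(t,x) \in \Rplus \times M$, which is the classical Lyapunov decrease condition. For the left square, identify $T\Rplus \cong \Rplus \times \R$, so that $\vec{0} \maps \Rplus \to T\Rplus$ is the map $r \mapsto (r,0)$ and $TV \maps TM \to T\Rplus$ sends $(x,v)$ to $(V(x), DV_x(v))$. The composite $TV \circ f \maps M \to T\Rplus$ is therefore $x \mapsto (V(x), DV_x(f(x)))$, and the composite $\vec{0} \circ V$ is $x \mapsto (V(x), 0)$. Since the posetal structure on $T\Rplus$ is fiberwise in the second coordinate, the square lax commutes iff $DV_x(f(x)) \leq 0$ for all $x \in M$, i.e., the Lie derivative $\mathcal{L}_f V \leq 0$.

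For the direction Lyapunov morphism $\Rightarrow$ Lyapunov function, fix $x \in M$ and define $g \maps \Rplus \to \Rplus$ by $g(t) = V(\flow(t,x))$. Using the action axiom $\flow(s+t,x) = \flow(s,\flow(t,x))$ and differentiating in $s$ at $s=0$, one obtains the key identity
\[
\frac{\partial \flow}{\partial t}(t,x) = \frac{\partial}{\partial s}\Big|_{s=0}\flow(s,\flow(t,x)) = f(\flow(t,x)).
\]
The chain rule then gives $g'(t) = DV_{\flow(t,x)}\bigl(f(\flow(t,x))\bigr) \leq 0$ by hypothesis, so $g$ is non-increasing on $\Rplus$ and $V(\flow(t,x)) = g(t) \leq g(0) = V(x)$.

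For the converse, assume $V(\flow(t,x)) \leq V(x)$ for all $t \geq 0$ and $x \in M$. Fix $x$ and again set $g(t) = V(\flow(t,x))$. The hypothesis gives $g(t) - g(0) \leq 0$ for all $t \geq 0$, so the right-hand derivative satisfies
\[
g'(0^+) = \lim_{t \to 0^+}\frac{g(t)-g(0)}{t} \leq 0.
\]
But $g$ is smooth (composition of smooth maps, with $\flow$ smooth on the boundary chart of $\Rplus$), so this right derivative equals $DV_x\bigl(\partial_t\flow|_{(0,x)}\bigr) = DV_x(f(x))$ by definition of $f$. Hence $DV_x(f(x)) \leq 0$ as required. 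The only delicate point—and the main thing to justify carefully—is the identification $\partial_t \flow(t,x) = f(\flow(t,x))$ from the monoid action axiom, since everything else reduces to the chain rule and the elementary observation that a smooth function on $\Rplus$ with a maximum at $0$ has non-positive right derivative there.
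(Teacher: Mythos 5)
Your proof is correct and follows essentially the same route as the paper: the direction from $V(\flow(t,x))\leq V(x)$ to $TV\circ f\leq 0$ is the same difference-quotient limit, and your monotonicity argument for the converse is just an explicit unpacking of the comparison lemma (applied to $\dot y=0$) that the paper cites instead. The one thing you add is a careful justification of the semigroup identity $\partial_t\flow(t,x)=f(\flow(t,x))$, which the paper's proof leaves implicit.
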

\begin{proof}
For all $x,t$, we have $V(x)\geq V(\flow(t,x))$ iff $0\geq V(\flow(t,x))-V(x)$. So if $V(x)\geq V(\flow(t,x))$ then, noting that $x=\flow(0,x)$, dividing both sides by $t>0$ and taking the limit, we have
\[
0\geq\lim_{t\to 0}\frac{(V\circ\flow)(t,x))-(V\circ\flow)(0,x)}{t}=TV\circ f.
\]
In the other direction, one uses the comparison lemma \cite{NonlinearSystems} applied to $\dot{y} = 0$ for $y \in \Rplus$ with initial condition $y(0) = V(x)$ yielding: 
\[
\dot{V}(x) = TV \circ f \leq 0 \qquad \textrm{implies} \qquad V(\flow(t,x)) \leq V(x)
\]
as desired. 
\end{proof}

Note that this result recovers the conditions of the classic Lyapunov theorem \eqref{eqn:lyapunov_condition}; namely, we have recovered the fact that the diagram in \eqref{eqn:introclassiclyap} lax commuting implies \eqref{eqn:stabilitydiagrammotivation} commutes, and vice versa. We generalize this idea to systems in \cite{CLT2}. 

\subsection{Kalman filtering}
\label{sec:Kalman}

Settings for stability can involve unconventional time, space and stability objects. An example of this is given by Kalman Filtering, a method widely used to estimate the state of uncertain systems. Consider a linear (discrete-time) system subject to state and output noise: 
\begin{eqnarray}
    x_{k+1} & = & A_k x_k + F_k w_k \nonumber\\
    y_k & = & C_k x_k + v_k \nonumber
\end{eqnarray}
where $A_k \in \R^{n \times n}, F_k \in \R^{n \times p}, C_k \in \R^{q \times n}$, with $A_k$ non-singular, are deterministic time-varying matrices describing the process and output dynamics with process noise $w_k$ and measurement noise $v_k$ represented by random vectors that are Gaussian with 0 mean and (for simplicity) identity covariance, $w_k, v_k \in \mathcal{N}(0,I)$. 

Let $\hat{x}_k$ be the best (optimal) estimate of $x_k$, i.e., the expected value $\mathbb{E}[x_k - \hat{x}_k] = 0$, and $P_k$ be the conditional error covariance matrix, $P_k \coloneq \mathbb{E}[(x_k - \hat{x}_k)(x_k - \hat{x}_k)^T]$. The central result by K\'alm\'an \cite{kalman1960new} is that $\hat x_k$ can be obtained as a linear combination of $\hat x_{k-1}$ and $y_k$:
\begin{eqnarray}
    \hat{x}_k & = &  (A_k - P_k R_k A_k) \hat{x}_{k-1} + P_k C_k^T y_k \nonumber\\
    P_k & = & \flow_k(P_{k-1}) \nonumber
\end{eqnarray}
where $R_k = C_k^T C_k$ and $\phi_k$ is given by the expression
\begin{eqnarray}
\label{eqn:errorcovariance}
    \flow_k(P) \coloneq 
    (A_k P A_k^T + S_k) (I + R_k S_k + R_k A_k P A_k^T)^{-1}
\end{eqnarray}
where $S_k = F_k F_k^T$. We can study the convergence properties of the Kalman filter from a Lyapunov perspective. 

Following \cite{Bougerol93}, denote by $J$ the symplectic matrix:
\[
J=
\begin{bmatrix}
0 & I\\-I & 0
\end{bmatrix},
\]
where $I$ is the $n\times n$ identity matrix and $0$ is the $n\times n$ zero matrix. The symplectic group $Sp(n)$ is the group of all $2n\times 2n$ matrices $M$ satisfying $M^T JM=J$. We also represent such matrices $M$ by:
\[
M=\begin{bmatrix} A & B\\
C & D
\end{bmatrix},
\]
where $A,B,C,D\in \R^{n\times n}$. We now define the set of matrices:
\[
\mathcal{H} \coloneq \{ M \in Sp(n) ~ : ~ A ~ \textrm{invertible}, \quad BA^T \in \mathcal{P}, 
\quad A^TC  \in \mathcal{P} \} \]
where $\mathcal{P}$ is the set of positive semi-definite $n \times n$ matrices (for all $P \in \mathcal{P}$, if $x \neq 0$ then $x^T P x \geq 0$). 
It can be shown that $\mathcal{H}$ is a monoid under matrix multiplication. 

Let $\mathcal{P}_0$ be the cone of positive definite $n \times n$ matrices (for all $P \in \mathcal{P}_0$, if $x \neq 0$ then $x^T P x > 0$). We now define an action of $\mathcal{H}$ on $\mathcal{P}_0$ by:
\begin{eqnarray}
\label{eqn:kalmanflow}
    \flow \maps \mathcal{H}  \times \mathcal{P}_0  & \to &  \mathcal{P}_0  \\
    (M,P) & \mapsto & (AP+B)(CP+D)^{-1} \nonumber
\end{eqnarray}
This flow is termed the \emph{the discrete Riccati equation}, and encodes solutions to the classic continuous-time matrix Riccati equation. Namely, under mild assumptions, if $P_t$, $t \in \Rplus$, is the solution to the Riccati equation for a continuous-time system: 
\[
\dot{P}_t = A_t P_t + P_t A_t^T - P_t R_t P_t + S_t, \qquad P_0 \in \mathcal{P}_0
\]
for $R_t,S_t \in \mathcal{P}_0$, $t \in \Rplus$, there exists a family of $N_t \in \mathcal{H}$ such that $P_t = \flow(N_t,P_0)$. 

This leads to a setting for stability defined by: 
\begin{enumerate}
\setcounter{enumi}{-1}
    \item[\textbf{S0:}] (setting) the category $\Man$, with terminal object $1 = \{*\}$. 
    \item[\textbf{S1:}] (space) The space of interest is the cone of positive-definite matrices $\mathcal{P}_0$. 
    \item[\textbf{S2:}] (time) the object $\mathcal{H}$, which is a monoid with matrix multiplication and $0_\oplus = I$ the identity matrix. 
    \item[\textbf{S3:}] (stable object) the set $\mathbb{R}_{\geq 0}$ of the non-negative reals, with  
    \begin{itemize}
        \item point $0 \in \Rplus$. 
        \item posetal structure: $f \To g$, if $f(x) \geq g(x)$ for all $x \in E$. 
    \end{itemize} 
    \item[\textbf{S4:}] (distance) the morphism:
    \begin{eqnarray}
    \label{eqn:kalmandistance}
        d \maps \mathcal{P}_0  \times \mathcal{P}_0  & \to &  \Rplus  \\
        (P,Q) & \mapsto & d(P,Q)=\sqrt{\sum_{i=1}^n \log^2\left(\lambda_i(PQ^{-1}) \right)} \nonumber
    \end{eqnarray}
    where $\lambda_1,\hdots,\lambda_n$ are the eigenvalues of $PQ^{-1}$.
    The diagram \eqref{eqn:distanceaxiom} lax commutes since: 
    \begin{itemize}
        \item $d(P,Q) \geq 0$
        \item  $d(P,Q) = 0$ iff $P = Q$
    \end{itemize}  
\end{enumerate}

\paragraph{\textbf{Flows.}}  A flow $\flow \maps \mathcal{H} \times \mathcal{P}_0 \to \mathcal{P}_0$ is given as in \eqref{eqn:kalmanflow}. 
In \cite{Bougerol93} (see Theorem 1.7) it was shown that $\flow$ is a \emph{contraction}:
for all $M \in \mathcal{H}$ and $P,Q \in \mathcal{P}_0$:
\begin{eqnarray}
\label{eqn:contraction}
    d(\flow(M,P),\flow(M,Q))\leq d(P,Q).
\end{eqnarray}
The goal is to leverage this result in the categorical framework for stability.

\paragraph{\textbf{Stability.}}  An \emph{equilibrium point} $P^* \in \mathcal{P}_0$ is a fixed point: \[\flow(M,P^*) = (AP^* + B)(CP^* + D)\inv = P^*\] for any $M \in \mathcal H$. As a result, the positive definite matrix $P^*$ is \emph{stable} if: 
\[
 \| \flow(M,P) \|_{P^*}  \leq \alpha(\| P \|_{P^*}). 
\]
where the \emph{norm} is given by: 
$\|  P \|_{P^*} = d(P,P^*)$ as given by \eqref{eqn:kalmandistance}. Here $\alpha$ is a smooth class $\K$ function since the stable object is $\Rplus$. 

\paragraph{\textbf{Lyapunov's Theorem.}}  A Lyapunov morphism $V \maps \mathcal{P}_0 \to \Rplus$ for $\flow$ in this case satisfies: 
\begin{enumerate}
    \item positive definite: $\underline{\alpha} (\|P\|_{P^*}) \leq V(P) \leq \overline{\alpha}(\|P\|_{P^*})$
    \item decrescent: $V(\flow(M,P)) \leq V(P)$. 
\end{enumerate}
Theorem \ref{thm:Lyap} implies that, in that case, $P^*$ is a stable equilibrium point. The following proposition implies that Lyapunov morphisms exist in this setting for stability and are, in fact, trivial. 

\begin{prop}
\label{prop:kalman}
    Consider the flow $\flow \maps \mathcal{H} \times \mathcal{P}_0 \to \mathcal{P}_0$ given in \eqref{eqn:kalmanflow} with equilibrium point $P^* \in \mathcal{P}_0$. Then the morphism $V \maps \mathcal{P}_0 \to \Rplus$ defined by
    \[
    V(P) \coloneq \| P \|_{P^*} = d(P,P^*)
    \]
    is a Lyapunov morphism for $\flow$, and therefore the positive definite matrix $P^*$ is stable. 
\end{prop}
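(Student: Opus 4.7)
The plan is to verify the two conditions in the definition of a Lyapunov morphism for $V(P) \coloneq \|P\|_{P^*} = d(P, P^*)$, and then invoke the categorical Lyapunov theorem (\cref{thm:Lyap}) to conclude stability of $P^*$.

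For the positive definite condition, the situation is essentially trivial: by definition $V = \|\cdot\|_{P^*}$, so choosing $\underline{\alpha} = \overline{\alpha} = \id_{\Rplus}$ (which is plainly class $\K$) gives the required bounds $\underline{\alpha}(\|P\|_{P^*}) \leq V(P) \leq \overline{\alpha}(\|P\|_{P^*})$ as equalities. Hence the triangular diagram in \cref{def:positivedef} commutes on the nose.

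The decrescent condition is where the real content lies, and it follows by combining two facts. First, since $P^*$ is an equilibrium point of $\flow$, we have $\flow(M, P^*) = P^*$ for every $M \in \mathcal{H}$, so
\[
V(\flow(M, P)) = d(\flow(M, P), P^*) = d(\flow(M, P), \flow(M, P^*)).
\]
Second, by the contraction property \eqref{eqn:contraction} established in \cite{Bougerol93}, this distance is bounded by $d(P, P^*) = V(P)$. Chaining these yields $V(\flow(M, P)) \leq V(P)$, which is precisely the decrescent condition.

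With both conditions verified, $V$ is a Lyapunov morphism for $\flow$ at $P^*$, and \cref{thm:Lyap} immediately gives stability of $P^*$. The main (and only) nontrivial input is the contraction inequality from \cite{Bougerol93}; everything else is bookkeeping, including that the flow in this example is in fact weakly contracting in the sense of \cref{def:weaklycontracting}, so the result could equivalently be deduced from \cref{prop:weaklycontracting}.
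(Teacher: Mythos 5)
Your proposal is correct and matches the paper's own proof essentially line for line: both take $\underline{\alpha} = \overline{\alpha} = \id_{\Rplus}$ for positive definiteness and obtain the decrescent condition by rewriting $V(\flow(M,P)) = d(\flow(M,P),\flow(M,P^*))$ via the equilibrium property and then applying the contraction inequality \eqref{eqn:contraction}. Your closing observation that the result also follows from \cref{prop:weaklycontracting} is a nice extra remark but does not change the argument.
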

\begin{proof}
    By the definition of $V$, picking $\underline{\alpha} = \overline{\alpha} = \id_{\Rplus}$ yields (1). By \eqref{eqn:contraction}:
    \[
    V(\flow(M,P)) = d(\flow(M,P),P^*) = 
    d(\flow(M,P),\flow(M,P^*)) \leq d(P,P^*) = V(P)
    \]
    Therefore, (2) is satisfied.
\end{proof}

To see the importance of stability in the context of Kalman filtering, note that the flow $\flow$ in \eqref{eqn:kalmanflow} corresponds to the error covariance matrix updates as determined by \eqref{eqn:errorcovariance}. Namely,
\begin{eqnarray}
\label{eqn:Mk}
\flow(M_k, P) = \flow_k(P) \qquad \mathrm{with} \qquad 
M_k  = \begin{bmatrix}
    A_k & S_k A^{-T}_k \\
    R_k A_k & (I + R_k S_k) A^{-T}_k  
\end{bmatrix} \in \mathcal{H}. 
\end{eqnarray}
Therefore, stability of the equilibrium point $P^*$ of the flow $\flow$, as certified by the Lyapunov morphisms $V$, implies that the error covariance matrix $P_k =  \mathbb{E}[(x_k - \hat{x}_k)(x_k - \hat{x}_k)^T]$ is bounded. As a result, by Proposition \ref{prop:kalman}, the estimation error $x_k - \hat{x}_k$ is bounded (in expectation) when utilizing a Kalman filter. 

\section{Stability in Enriched Categories}
\label{sec:EnrichedStability}

In his seminal work \cite{LawvereMetric}, Lawvere pointed out that metric spaces are special cases of categories enriched in the non-negative real numbers, what is now referred to as \emph{Lawvere metric spaces}. Following this example, enriched categories give a natural framework for the synthesis of settings for stability. We begin by briefly reviewing the basics of enriched category theory in \cref{sec:enrichedcats}. The reader will find further detail in Kelly's book \cite{Kelly}. We then give the setting for stability in a Lawvere metric space in \cref{sec:LawvereMetric} as motivation for the general case of stability in a category enriched in a closed symmetric monoidal poset with products in \cref{sec:StabEnrichedCat}. This then allows us to specialize to the case of set-based stability by enriching in a power set in \cref{sec:powersetenriched}.

\subsection{Enriched Categories}
\label{sec:enrichedcats}

We begin with a review of some basic terminology regarding enriched categories. 

\begin{defn}
\label{def:enrichedcat}
    Let $(\V, \otimes, 0_{\V})$ be a monoidal category with monoidal product $\otimes$ and monoidal unit $0_{\V}$. A \define{category enriched in $\V$} or \define{$\V$-category}, $\C$, consists of the following data:
    \begin{itemize}
        \item a set $ob(\C)$ of \define{objects}
        \item for all $X, Y \in ob(\C)$, a \define{hom object} $\C(X,Y) \in \V$ 
        \item for all $X, Y, Z \in C$ a \define{composition morphism} in $\V$: 
        \[
            \C(X,Y) \otimes \C(Y,Z) \xrightarrow{\circ} \C(X,Z)
        \]
        \item for all $X \in \C$, an \define{identity morphism} in $\V$: 
        \[
        0_{\V} \xrightarrow{\iota_X} \C(X,X)
        \]
    \end{itemize}
    The composition morphism and identity morphism are subject to consistency conditions \cite{Kelly}.
\end{defn}

 If $\V$ is additionally closed and symmetric, then $\V$ itself may be given the structure of a $\V$-category via internal hom.

\begin{rmk}
    A $(\Set, \times, 1)$-enriched category is precisely a (small) category. Historically, many early examples of enriched categories had $\V$ a concrete category, i.e., a category of sets equipped with extra structure, e.g., abelian groups or vector spaces. In these cases, a $\V$-category is then precisely an ordinary category equipped with extra structure on the hom sets, hence ``enriched''. However, there are monoidal categories which are not concrete, but we may consider categories enriched in them anyway, as in \cref{sec:LawvereMetric} below. In this case, it does not make sense to think of a $\V$-category as an ordinary category equipped with extra structure.
\end{rmk}

\begin{defn}
\label{def:Vfunctor}
    For two $\V$-categories $\C$ and $\D$, a \define{$\V$-functor} $F \maps \C \to \D$ between these categories consists of a function $F \maps \Ob(\C) \to \Ob(\D)$ between the objects of $\C$ and $\D$ together with a morphism in $\V$
    \[
    F_{X,Y} \maps \C(X,Y) \to \D(F(X),F(Y)) \in \V
    \]
    for every pair $X,Y \in ob(\C)$, satisfying conditions of unit and composite preservation.
\end{defn}

\begin{defn}
\label{def:Vnattransf}
    Given two $\V$-functors $F,G \maps \C \to \D$, a \define{$\V$-natural transformation} $\alpha \maps F \To G$ consists of a morphism $\alpha_X \maps 0_\V \to \D(FX,GX)$ of $\V$ for each object $X \in ob(\C)$, satisfying a naturality condition.
\end{defn}

There is a 2-category $\VCat$ of $\V$-enriched categories, $\V$-functors, and $\V$-natural transformations.

\begin{rmk}
    In this paper, we always take $\V$ to be a closed symmetric monoidal \emph{poset}. In this case, $\V$ considered as an object of $\VCat$ is a posetal object. Indeed, every object in $\VCat$ is posetal, though the rest of this structure is not useful presently. 
\end{rmk}

\subsection{Stability in Lawvere Metric Spaces}
\label{sec:LawvereMetric}

Consider the monoidal poset $\Rplusinfty = [0,\infty]$, using the reverse $\geq$ of the usual order $\leq$, with monoidal product given by (extended) addition, $+$, and monoidal unit $0$. A category enriched in $\Rplusinfty$ is a \define{Lawvere metric space} \cite{LawvereMetric}. A Lawvere metric space is like an ordinary metric space in that its set of objects are taken to be the points of the space, and the function assigning pairs of objects to some element of $[0,\infty]$ obeys some of the properties of an ordinary metric. Indeed, the composition and identity morphism of \cref{def:enrichedcat} translate to the triangle inequality and the self-distance zero condition of ordinary metrics. 
Namely, letting $d(X,Y) = C(X,Y)$ be the \emph{Lawvere metric}, then given the monoidal structure $(\Rplus,+ ,0)$ with morphisms $\geq$ in $\Rplus$ yields: 
\begin{eqnarray*}
    \textrm{Identity morphism:}  && 0 \geq d(X,X) \\
    \textrm{Composition morphism:} && d(X,Y) + d(Y,Z) \geq d(X,Z). 
\end{eqnarray*}
In general, a Lawvere metric space is not symmetric, it does not necessarily obey $d(X,Y) = d(Y,X)$, and separability only holds up to isomorphism, $d(X,Y)=0$ implies $X \cong Y$.

The goal is to build a framework for settings for stability around Lawvere metric spaces. But doing to requires some care in properly defining the corresponding setting, i.e., one might be tempted to take the setting to be $\Rplusinfty\mhyphen\Cat$, the category of Lawvere metric spaces with $\Rplusinfty$-functors as morphisms. Yet this is overly restrictive. Namely, $\Rplusinfty$-functors are Lipschitz continuous functors with Lipschitz constant $1$; therefore, the morphisms in $\Rplusinfty\mhyphen\Cat$ are distance-shrinking functions. But first we note the following. 

\begin{rmk}
\label{rmk:productsinRplusCat}
    The category $\Rplusinfty\mhyphen\Cat$ has products because $\Rplusinfty$ has products (given by $\max$). Given to Lawvere metric spaces, $\C$ and $\D$, the product is given by taking the product in $\Set$ of object-sets $ob(\C\times\D) \coloneqq ob\C \times ob\D$, and the product in $\Rplusinfty$ of hom-objects, $\C\times \D((c_1,d_1),(c_2,d_2)) \coloneqq \max\{\C(c_1,c_2), \D(d_1,d_2)\}$.
\end{rmk}

\paragraph{\textbf{Limitations of $\Rplusinfty\mhyphen\Cat$.}}
To see why $\Rplusinfty\mhyphen\Cat$ is an overly restrictive setting for stability, take $T$ to either be $\Zplus$ or $\Rplus$ under addition. In either case, we realize them as metric spaces with the \define{discrete metric}, 
\[
    T(t_1,t_2) = \left\{ 
    \begin{array}{lcr}
    0 & \mathrm{if} & t_1 = t_2 \\
    \infty & \mathrm{if} & t_1 \neq t_2 
    \end{array} 
    \right. 
\]
A flow $\flow \maps T \times \C \to \C$ must be a distance-shrinking map. Since $T$ is assumed to be discrete, the metric on the product is only possibly finite when the $T$-component of the two objects are the same, in which case, the distance is precisely the distance between the $\C$-components.
\[
    T \times \C((t_1,c_1),(t_2,c_2)) = \left\{ 
    \begin{array}{lcr}
    \C(c_1,c_2) & \mathrm{if} & t_1 = t_2 \\
    \infty & \mathrm{if} & t_1 \neq t_2 
    \end{array} 
    \right. 
\]
Thus, $\flow$ is distance-shrinking if and only if $\flow_t \maps C \to C$ is distance-shrinking for all $t \in T$, $\C(c_1, c_2) \geq \C(\flow_t(c_1), \flow_t(c_2))$. 
An immediate consequence of this is that if the space has an equilibrium point $x^*$ with respect to a flow $\flow$, then any $T$-shaped trajectory $c \maps T \to \C$ must be monotonically approaching $x^*$, and hence $x^*$ is stable. The sorts of systems expressible in this category are precisely those which do not need Lyapunov theory to deduce stability. 

\paragraph{\textbf{The Category $\LMet$.}}
To address the limitations of $\Rplusinfty\mhyphen\Cat$ indicated above, define a new category: \define{Lawvere Metric Spaces, $\LMet$}, with objects being Lawvere metric spaces, and morphisms being arbitrary Lipschitz continuous functions. That is, a map in $\LMet$ is a function $f \maps X \to Y$ such that there exists an object $r \in \Rplus$ such that $f$ as a map of the form $r\cdot X \to Y$ is an $\Rplusinfty$-functor, where $r\cdot X$ is the metric space with $X$ as the underlying set, and metric given by $(x_1,x_2) \mapsto r \cdot X(x_1,x_2)$. Notice that $\Rplusinfty\mhyphen\Cat$ is a wide subcategory of $\LMet$ where $r$ can be chosen to be $1$. 
Additionally, the product in $\Rplusinfty\mhyphen\Cat$ (see Remark \ref{rmk:productsinRplusCat}) yields the product in $\LMet$. For a map $f \maps \Q \to \C$ with Lipschitz constant $K_f$, and a map $g \maps \Q \to \D$ with Lipschitz constant $K_g$, the function $(f,g) \maps Q \to \C \times \D$ has Lipschitz constant $K \coloneqq \max \{K_f, K_g\}$ because:
\begin{align*}
    \C\times \D((f,g)q_1,(f,g)q_2) 
    &= \max\{\C(fq_1,fq_2), \D(gq_1,gq_2)\}
    \\&\leq \max\{K_f,K_g\} \cdot \Q(q_1,q_2).
\end{align*}

\begin{example}
A \define{Lawvere Setting for Stability} is given by: 
\begin{enumerate}
\setcounter{enumi}{-1}
    \item[\textbf{S0:}] (setting) the category $\LMet$ of Lawvere metric spaces and Lipschitz continuous functions
    \item[\textbf{S1:}] (space) a Lawvere metric space $\C$
    \item[\textbf{S2:}] (time) A Lawvere metric space $\T$ equipped with the structure of a monoid $(\T, +, 0_{\T})$ with addition being Lipschitz continuous
    \item[\textbf{S3:}] (stable object) $\Rplusinfty$ itself viewed as a Lawvere metric space has:
    \begin{itemize}
        \item the monoidal unit $0 \in \Rplusinfty$ as the distinguished point 
        \item $\Rplusinfty$ has the structure of a posetal object of $\LMet$ (\cref{def:posetal}) as follows: for a metric space $\D$, the set $\LMet(\D, \Rplusinfty)$ of Lipschitz continuous functions has a partial order induced pointwise by the order on $\Rplusinfty$.
    \end{itemize}
    \item[\textbf{S4:}] (distance) the distance morphism is given by the enriched hom functor:
    \begin{eqnarray}
        d = \C(-,-) \maps \C\op \times \C  & \to &  \Rplusinfty \\
        (X,Y) & \mapsto &  \C(X,Y) \in \Rplusinfty \nonumber
    \end{eqnarray}
    which assigns to every pair of objects in $\C$ the hom-object in $\Rplusinfty$. 
\end{enumerate}
\end{example}

\begin{rmk}
    Notice that the type of the distance map here is slightly different than that of \cref{def:setting}. This is simply to account for the natural variance of a hom functor. It does not disallow the use of the results proven in \cref{sec:catlyap}, as all the proofs refer directly to the norm $d(x^*,\cdot)$ for a given equilibrium point, rather than the distance map $d$ itself. 
\end{rmk}

\paragraph{\textbf{Flows.}}  In $\LMet$, flows are permitted to not be strictly distance decreasing. Equilibrium points are as expected, and they are stable if there exists a Lipschitz continuous class $\K$ function $\alpha$ such that $\C(x^*, \flow(t,x)) \leq \alpha(\C(x^*,x))$ for all $t \in T$ and $x \in \C$. 

\paragraph{\textbf{Class $\K$ Morphisms.}}  Class $\K$ maps in $\LMet$ are order-preserving automorphisms of $\Rplusinfty$. In particular, for $\alpha \maps \Rplusinfty \to \Rplusinfty$ to be class $\K$, there must exist $r \in \Rplus$ such that $r(y-x) \geq \alpha(y) -\alpha(x)$. Note that not all automorphisms of $\Rplusinfty$ as a category are also $\Rplusinfty$-functors, nor are they necessarily Lipschitz continuous. 

\paragraph{\textbf{Lyapunov Morphisms.}}  A map $V \maps \C \to \Rplusinfty$ is then positive definite if there exist such class $\K$ maps $\underline \alpha, \overline \alpha \maps \Rplusinfty \to \Rplusinfty$ such that $\underline \alpha(\C(x^*,x)) \leq V(x) \leq \overline \alpha(\C(x^*,x))$ in $\Rplusinfty$. As in the classical case, this essentially says that 
\begin{itemize}
    \item $V(x) \geq 0$ for all $x \in \C$
    \item $V(x) = 0$ if and only if $x= x^*$
    \item as $\C(x^*,x) \to \infty$, $V(x) \to \infty$.
\end{itemize}

A Lyapunov morphism in this setting $V \maps \C \to \Rplusinfty$ is a Lipschitz continuous positive definite function with the property that $V(\flow(t,x)) \leq V(x)$ for all $x \in \C$. The generalized Lyapunov theorem \cref{thm:Lyap} says that if we have a Lyapunov morphism for an equilibrium point of a Lipschitz continuous flow on a metric space, then the point is stable.

\subsection{Stability in Enriched Categories}
\label{sec:StabEnrichedCat}

The salient properties of $\Rplusinfty$ that played a role in the previous section are the following:
\begin{itemize}
    \item the monoid structure on $\Rplusinfty$ given by addition allowed us to define a notion of category enriched in $\Rplusinfty$, giving our notion of space,
    \item the fact that $\Rplusinfty$ as a category is actually a poset means that all 2-cells are unique if they exist, justifying the use of lax commuting diagrams
    \item as a category, $\Rplusinfty$ has products, so $\Rplusinfty\mhyphen\Cat$ has products, so $\LMet$ has products
    \item $\Rplusinfty$ under addition is closed symmetric monoidal, and thus $\Rplusinfty$ gets to be self-enriched, and representable $\Rplusinfty$-enriched functors are a sensible notion.
\end{itemize}
It also must be pointed out that $\Rplusinfty$ being a poset means that enriched functors are really just a function on the objects which satisfies a property, and this enabled us to describe the generalized maps between metric spaces as a modification of this property. An analogous generalization for categories enriched in a general closed symmetric monoidal category which would work for stability would likely be much more involved. 

\paragraph{\textbf{Lipschitz $\V$-Functors.}} 
 Let $\V$ be a category equipped with two monoidal structures, denoted $(\oplus, 0)$ and $(\otimes, I)$, such that $A \otimes (B \oplus C) \cong (A \otimes B) \oplus (A \otimes C)$ and $0 \otimes A \cong A \otimes 0 \cong 0$ for all objects $A,B,C \in \V$. This is often called a \define{rig category} \cite{CoherenceDistributivity,CoherenceLaxAlgebras}, or a \define{bimonoidal category} \cite{SheetBimonoidal,JohnsonYauBimonoidal}. Consider categories enriched in the additive monoidal structure. For $r \in \V$ and $X \in \VCat$, let $rX$ denote the $\V$-category with object-set $ob(rX) \coloneqq ob(X)$ and hom-$\V$-object $rX(a,b) \coloneqq r \otimes X(a,b)$ as objects of $\V$. Composition is given by 
\begin{align*}
    rX(b,c) \oplus rX(a,b) 
    &= [r \otimes X(b,c)] \oplus [r \otimes X(a,b)]
    \\&\cong r \otimes [X(b,c) \oplus X(a,b)]
    \\&\xrightarrow{r \otimes \circ} r \otimes X(a,c)
    \\&= rX(a,c)
\end{align*}
and the identity is given by 
\[0 \cong r \otimes 0  \xrightarrow{r \otimes \iota} r \otimes X(a,a) =rX(a,a).\]
The associativity and unit laws that must hold between these depend on the coherence conditions that hold in bimonoidal categories.


\begin{defn}
    Let $\V$ be a bimonoidal poset, and let $\C$ and $\D$ be $\V$-enriched categories. A \define{Lipschitz $\V$-functor} is a function $f \maps ob \C \to ob\D$ such that there exists a non-initial object $K_f \in \V$, termed a \define{Lipschitz object}, such that $f \maps K_f \C \to \D$ is a $\V$-functor. The composite of two Lipschitz functors $f \maps \C \to \D$ and $g \maps \D \to \E$ is Lipschitz with Lipschitz object $K_g \otimes K_f$. Let $\VCatgen$ denote the category of $\V$-categories and Lipschitz $\V$-functors.
\end{defn}

As the name suggests, when $\V = \Rplusinfty$, a Lipschitz $\V$-functor is exactly a Lipschitz continuous function. The reader should be careful to keep in mind that a Lipschitz $\V$-functor is a generalization of a $\V$-functor, rather than a special sort of $\V$-functor. Note that $\V$-functors are Lipschitz $\V$-functors with Lipschitz object $I$, the multiplicative unit of $\V$. Thus, $\VCat$ is a wide subcategory of $\VCatgen$, with $\VCat \hookrightarrow \VCatgen$ faithful. 

\begin{prop}
    If $\V$ has finite products, then so does $\VCatgen$. Additionally, the inclusion $\VCat \hookrightarrow \VCatgen$ preserves and creates products and limits. 
\end{prop}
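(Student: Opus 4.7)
The plan is to show that the product $\C \times \D$ of two $\V$-categories in $\VCat$—with object set $\Ob(\C) \times \Ob(\D)$ and hom objects $(\C \times \D)((c_1, d_1), (c_2, d_2)) \coloneq \C(c_1, c_2) \times \D(d_1, d_2)$ formed using the categorical product in $\V$—serves also as the product in $\VCatgen$. Since the projection $\V$-functors $\pi_\C, \pi_\D$ are strict $\V$-functors, they have Lipschitz object $I$ and so are morphisms in $\VCatgen$. The work is in verifying the universal property against cones of arbitrary Lipschitz $\V$-functors.

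Given Lipschitz $\V$-functors $f \maps \Q \to \C$ with Lipschitz object $K_f$ and $g \maps \Q \to \D$ with Lipschitz object $K_g$, I would define the pairing $(f, g)$ on objects by $q \mapsto (f(q), g(q))$ and take as candidate Lipschitz object the categorical product $K \coloneq K_f \times K_g$ in $\V$. To see that $(f, g) \maps K \Q \to \C \times \D$ is a $\V$-functor, note that by the universal property of the product, the required hom morphism $K \otimes \Q(q_1, q_2) \to \C(fq_1, fq_2) \times \D(gq_1, gq_2)$ is determined by its two legs. These legs are obtained from $K \otimes \Q(q_1, q_2) \xrightarrow{\pi \otimes \id} K_f \otimes \Q(q_1, q_2) \to \C(fq_1, fq_2)$ using functoriality of $\otimes$ and the $\V$-functor structure on $f \maps K_f \Q \to \C$, and analogously from the $K_g$-leg for $g$. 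Preservation of composition and of identities follows from unwinding these definitions and invoking the corresponding conditions for $f$ and $g$ together with the coherences of the bimonoidal structure. Uniqueness is immediate: the pairing is forced on objects, and on hom morphisms it is determined by its composites with $\pi_\C$ and $\pi_\D$.

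For the second claim, preservation of products under $\VCat \hookrightarrow \VCatgen$ is automatic because when $K_f = K_g = I$ the construction above reduces to the usual $\V$-functor pairing, which is precisely the product structure in $\VCat$. Creation follows by the standard argument: if a diagram in $\VCat$ has a limit $(L, \{p_j\})$ in $\VCatgen$, then viewing $L$ as a $\V$-category one may replace each Lipschitz leg $p_j$ with its underlying object-function and check that it is already a $\V$-functor by comparing with the canonical limit cone in $\VCat$; uniqueness of the factorization in $\VCatgen$ then identifies $L$ with the limit in $\VCat$. For general limits, the same scheme applies: build the limit $\V$-category levelwise using limits in $\V$ on hom-objects, and pair Lipschitz cones by taking a product of their Lipschitz objects in $\V$.

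The main obstacle is bookkeeping between the two monoidal structures on $\V$: distributivity and functoriality of $\otimes$ must be used carefully to convert a projection $K_f \times K_g \to K_f$ in $\V$ into a morphism $(K_f \times K_g) \otimes \Q(q_1, q_2) \to K_f \otimes \Q(q_1, q_2)$ that can be composed with the $\V$-functor structure on $f$. A subtler point is checking that $K_f \times K_g$ remains non-initial when $K_f$ and $K_g$ are, so that the result qualifies as a Lipschitz $\V$-functor under \cref{def:Vfunctor}; this is automatic when $\V = \Rplusinfty$ (where the product is $\max$) and holds under mild hypotheses on $\V$ (for instance, whenever the product of two non-initial objects is non-initial, as is the case in all concrete examples considered).
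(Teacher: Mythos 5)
Your proof is correct and takes essentially the same route as the paper: the product is inherited from $\VCat$, and the pairing of Lipschitz cones is shown to be Lipschitz with Lipschitz object the $\V$-product $K_f \times K_g$ (which, under the paper's order convention, is exactly the $\max\{K_f,K_g\}$ the paper uses). The only differences are cosmetic---since $\V$ is assumed to be a bimonoidal \emph{poset}, the coherence and universal-property bookkeeping you describe collapses to checking inequalities between hom-objects---though your remark that one must verify $K_f \times K_g$ is non-initial is a genuine subtlety that the paper's proof passes over silently.
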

\begin{proof}
    It is well-known that if $\V$ has finite products, then so does $\VCat$, and the product $\C \times \D$ has object set $ob\C \times ob\D$ and hom-objects \[C\times D((c_1,d_1),(c_2,d_2)) = \C(c_1,c_2) \times \D(d_1, d_2).\] We claim that this gives the product in $\VCatgen$ as well. Notice that the product on the right hand side of the equation above is the product in $\V$. Given our convention of using the order $\geq$ for posets, this product is generally denoted by $\max$. 

    Let $\E$ be a $\V$-category, and $p \maps \E \to \C$ and $q \maps \E \to \D$ Lipschitz functors. The underlying functions on objects admit a pairing $(p,q) \maps ob\E \to ob\C \times ob\D$. We can see this is a Lipschitz $\V$-functor with Lipschitz object $\max\{K_p,K_q\}$ because 
    \begin{align*}
        \C\times \D((p,q)(e_1),(p,q)(e_2))
        &= \max\{\C(p(e_2),p(e_2)), \D(q(e_1),q(e_2))\}
        \\&\leq \max\{K_p,K_q\} \E(e_1,e_2)
    \end{align*}
    for $e_i \in \E$. 

    Similarly, the terminal object $1$ of $\VCatgen$ has a single object, and the unique hom-object is the terminal object $1_\V$ of $\V$. For any $\V$-category $X$, there is a unique function of the form $! \maps ob X \to ob1$. For any pair of objects $a,b \in X$, there is a unique map $X(a,b) \to 1(\ast,\ast) = 1_\V$. Thus this is a $\V$-functor. 
\end{proof}

We abstract the properties of $\Rplusinfty$ listed above to discuss settings for stability in enriched categories. 

\begin{defn}
Let $\V$ be a closed symmetric bimonoidal poset with products. Then a \define{$\V$-enriched setting for stability} in $\VCatgen$, is given by: 
\begin{enumerate}
\setcounter{enumi}{-1}
    \item[\textbf{S0:}] (setting) the category $\VCatgen$ of $\V$-categories and Lipschitz $\V$-functors
    \item[\textbf{S1:}] (space) a $\V$-category $\C$
    \item[\textbf{S2:}] (time) A strict monoidal $\V$-category $(\T, +, 0_{\T})$.
    \item[\textbf{S3:}] (stable object) the enriching category $\V$ itself viewed as a $\V$-category has:
    \begin{itemize}
        \item the distinguished point $0_{\V} \in \V$ 
        \item $\V$ has the structure of a posetal object of $\VCatgen$ (\cref{def:posetal}) as follows: for a $\V$-category $\D$, the set $\VCatgen(\D, \V)$ of Lipschitz functors inherits a partial order pointwise from $\V$.
    \end{itemize}
    \item[\textbf{S4:}] (distance) the distance morphism is given by the hom functor:
    \begin{eqnarray}
    \label{eqn:distanceVfunctor}
        d = \C(-,-) \maps \C\op \times \C  & \to &  \V \\
        (X,Y) & \mapsto &  \C(X,Y) \in \V \nonumber
    \end{eqnarray}
    which assigns to every pair of objects in $\C$ the hom-object in $\V$. 
\end{enumerate}
\end{defn}

\begin{lem}
\label{lem:distance}
For a $\V$-enriched setting for stability in $\VCatgen$, the distance $\V$-functor \eqref{eqn:distanceVfunctor} satisfies for all $X,Y,Z \in \C$:
\begin{enumerate}[(i)]
    \item[(i)] Positivity: $d(X,Y) \geq 0_{\V}$, 
    \item[(ii)] Separability: $d(X,Y) = 0$ and $d(Y,X) = 0$ if and only if $X \cong Y$. 
    \item[(iii)] Triangle Inequality: $d(X,Y) \otimes d(Y,Z) \geq d(X,Z)$. 
\end{enumerate}
\end{lem}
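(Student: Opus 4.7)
\medskip

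All three properties unpack directly from the axioms of $\C$ as a $\V$-enriched category, combined with the fact that $\V$ is a poset, so that any morphism in $\V$ is unique when it exists.

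I would begin with (iii), which is the most direct. The enriched structure of $\C$ provides, for each triple $X, Y, Z \in \C$, a composition morphism
\[
\C(X,Y) \oplus \C(Y,Z) \longrightarrow \C(X,Z)
\]
in $\V$. Since $\V$ is a poset, the existence of such a morphism is exactly the inequality $\C(X,Y) \oplus \C(Y,Z) \geq \C(X,Z)$ in $\V$. Reading $d = \C(-,-)$ and interpreting the $\otimes$ in the statement as the additive monoidal product $\oplus$ used for the $\V$-enrichment gives (iii).

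For positivity (i), the identity axiom $\iota_X \maps 0_{\V} \to \C(X,X)$ immediately handles the self-distance case by giving a (unique) morphism in the poset $\V$. To obtain $d(X,Y) \geq 0_{\V}$ for arbitrary $X,Y$, I would use the bimonoidal absorption property $0_{\V} \otimes A \cong 0_{\V}$ combined with distributivity and the fact that $\V$ has products: these together force $0_{\V}$ to be the terminal object of $\V$, so there is a unique morphism $\C(X,Y) \to 0_{\V}$, which witnesses $d(X,Y) \geq 0_{\V}$.

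For separability (ii), the backward direction follows from applying $\V$-functoriality of $\C(-,-)$ to an isomorphism $X \cong Y$. For the forward direction, suppose $d(X,Y) = 0_{\V} = d(Y,X)$. The equalities $\C(X,Y) = 0_{\V}$ and $\C(Y,X) = 0_{\V}$ produce identity morphisms $f_\flat \maps 0_{\V} \to \C(X,Y)$ and $g_\flat \maps 0_{\V} \to \C(Y,X)$ in $\V$, which ``name'' external morphisms $f \maps X \to Y$ and $g \maps Y \to X$ in $\C$. The enriched composition then yields
\[
0_{\V} \;\cong\; 0_{\V} \oplus 0_{\V} \xrightarrow{\; f_\flat \oplus g_\flat \;} \C(X,Y) \oplus \C(Y,X) \longrightarrow \C(X,X),
\]
and since $\V$ is a poset this composite must coincide with $\iota_X$, i.e.\ $g \circ f = \id_X$. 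Symmetrically $f \circ g = \id_Y$, so $X \cong Y$.

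The main obstacle I expect is (i): the triangle inequality and separability are essentially the composition and identity axioms of $\V$-enrichment translated into poset language, but positivity requires genuinely exploiting the bimonoidal structure (absorption and distributivity) together with the existence of products in $\V$ to establish that $0_{\V}$ sits as a terminal/top element relative to the hom-objects $\C(X,Y)$. Once this structural fact is in hand, everything else is a direct consequence of $\V$-enrichment.
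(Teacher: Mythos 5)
Your handling of (ii) and (iii) matches the paper's proof: (iii) is exactly the composition morphism of the enrichment read as an inequality in the poset $\V$, and for (ii) the paper likewise observes that an isomorphism $X \cong Y$ amounts to the existence of maps $0_{\V} \to \C(X,Y)$ and $0_{\V} \to \C(Y,X)$, which in a poset gives $\C(X,Y) \leq 0_{\V}$ and $\C(Y,X) \leq 0_{\V}$ with the coherence squares commuting automatically, so that (ii) reduces to (i). Your remark that the $\otimes$ in (iii) should be read as the additive product $\oplus$ used for the enrichment is a fair reading of the paper's notation.

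The gap is in (i). You claim that absorption $0_{\V} \otimes A \cong 0_{\V}$, distributivity, and the existence of products together \emph{force} $0_{\V}$ to be terminal. They do not. Take $\V$ to be the two-element chain $\{a \to b\}$ with $\otimes$ the meet (unit $I = b$, which is the terminal object) and $\oplus$ the join (unit $0_{\V} = a$, which is initial); this is a closed symmetric bimonoidal poset with products satisfying absorption and distributivity, yet $0_{\V}$ is initial rather than terminal, and the one-object $\V$-category with hom-object $b$ violates positivity. What absorption actually yields is a conditional statement: if there is a morphism $I \to 0_{\V}$, then every $A \cong I \otimes A$ admits a map to $0_{\V} \otimes A \cong 0_{\V}$, so $0_{\V}$ is terminal --- but nothing in the listed axioms supplies the morphism $I \to 0_{\V}$. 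The paper does not attempt your derivation: its proof of (i) simply invokes the assumption that $0_{\V}$ is terminal (a standing hypothesis that is not spelled out in the definition of a $\V$-enriched setting for stability but holds in all of the paper's examples, namely $\Rplusinfty$ with the reversed order and $\P(E)$ ordered by $\supseteq$). To repair your argument, add terminality of $0_{\V}$ (or the morphism $I \to 0_{\V}$) as an explicit hypothesis; with that in hand, the remainder of your proof, including the reduction of (ii) to (i), goes through.
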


\begin{proof}
(i) \emph{Positivity:} follows from the assumption that $0_{\V}$ is terminal. 

(ii) \emph{Separability:}  By definition, two objects $X, Y$ of a $\V$-category are isomorphic if there are maps of $\V$ of the form $f \maps 0_\V \to C(X,Y)$ and $g \maps 0_\V \to C(Y,X)$ which make the following squares commute.
\[
\begin{tikzcd}
    0_\V
    \arrow[r, "\sim"]
    \arrow[d, "id_a"']
    & 
    0_\V \otimes 0_\V
    \arrow[d, "g \otimes f"]
    \\
    C(a,a)
    &
    C(b,a) \otimes C(a,b)
    \arrow[l, "\circ"]
\end{tikzcd}
\quad
\begin{tikzcd}
    0_\V
    \arrow[r, "\sim"]
    \arrow[d, "id_b"']
    & 
    0_\V \otimes 0_\V
    \arrow[d, "f \otimes g"]
    \\
    C(b,b)
    &
    C(a,b) \otimes C(b,a)
    \arrow[l, "\circ"]
\end{tikzcd}
\]
Since $\V$ is a poset, the existence of such maps means that $\C(X,Y) \leq 0_\V$ and $\C(Y,X) \leq 0_\V$, and the two squares above commute automatically. Thus (ii) follows from (i).

(iii) \emph{Triangle Inequality:} follows from the composition morphism. 
\end{proof}

\begin{rmk}
\label{rem:symmetric}
    As noted in Section \ref{sec:LawvereMetric}, Lawvere metrics are not necessarily symmetric and, as a result, the distance morphism associated with a $\V$-enriched setting for stability are not necessarily symmetric. A Lawvere metric space $\C$ can be symmetrized, resulting in a new metric space $sym\C$ with the same objects and metric defined by 
    \[sym\C(X,Y) = \max\{ \C(X,Y),\C(Y,X) \}.\]
    Similarly, if $\V$ is a symmetric monoidal poset, any $\V$-category can be symmetrized by the same formula.
    
    In these cases, the result is the following additional property on the distance $\V$-functor \eqref{eqn:distanceVfunctor}: for $X, Y \in \C$: 
    \begin{itemize}
        \item[(iv)] Symmetry: $d(X,Y) = d(Y,X)$. 
    \end{itemize}
\end{rmk}

\begin{rmk}
If $\V$ additionally has an initial object, which we choose to denote by $\infty$ in analogy with $\Rplusinfty$, then any set $X$ can be equipped with a \define{discrete $\V$-category} structure. For an element $x \in X$, the hom-object $\disc X(x,x)$ is defined to be the terminal object $0_\V$, and for any other object $y \neq x$, the hom-object $\disc X(x,y)$ is defined to be the initial object $\infty$. This extends to a functor $\disc \maps \Set \to \VCatgen$. 

The functor $\disc$ is fully faithful as functions out of a discrete $\V$-category are automatically $\V$-functors, and preserves products as the product of discrete $\V$-categories is again discrete. Thus, as in the case of metric spaces, both of the monoids $(\Zplus, +,0)$ and $(\Rplus, +,0)$ equipped with the discrete $\V$-category structure are monoid objects in $\VCat$, and thus in $\VCatgen$ as well. 
\end{rmk}

\paragraph{\textbf{Flows.}} Let $T$ be a discrete monoid object in $\VCatgen$. A flow $\flow \maps T \times \C \to \C$ is then a monoid action of $T$ on $ob \C$ which is a Lipschitz functor. Let $K_\flow$ be a Lipschitz object for $\flow$. Since $T$ is discrete, the property of $\flow$ being a Lipschitz functor is equivalent to the property: for all $t \in T$, the map $\flow(t, -) \maps \C \to \C$ is a Lipschitz functor with Lipschitz object $K_\flow$. Equilibrium points are as expected, and they are stable if there exists a Lipschitz class $\K$ functor $\alpha$ such that $\C(x^*, \flow(t,x)) \leq \alpha(\C(x^*,x))$ for all $t \in T$ and $x \in \C$. 

\paragraph{\textbf{Class $\K$ Morphisms.}} A class $\K$ map in an enriched setting is an order-preserving invertible Lipschitz functor $\alpha \maps \V \to \V$ which preserves the monoidal unit $0_\V$. In particular, there exists an object $K_\alpha \in \V$ such that  $K_\alpha \otimes \V(x,y) \geq \V(\alpha x, \alpha y)$ for all $x,y \in \C$. Not all automorphisms of $\V$ as a poset are also Lipschitz functors. 

\paragraph{\textbf{Lyapunov Functors.}} A Lipschitz functor $V \maps \C \to \V$ is then positive definite if there exist such class $\K$ maps $\underline \alpha, \overline \alpha \maps \V \to \V$ such that $\underline \alpha(\C(x^*,x)) \leq V(x) \leq \overline \alpha(\C(x^*,x))$ in $\V$. As in the case of Lawvere metric spaces, this implies $V$ is \define{positive definite} if:
\begin{itemize}
    \item $V(x) \geq 0_\V$ for all $x \in \C$
    \item $V(x) = 0_\V$ if and only if $x = x^*$
    \item as $\C(x^*,x) \to \infty$, $V(x) \to \infty$.
\end{itemize}
The categorical Lyapunov theorem, \cref{thm:Lyap}, therefore yields the following:

\begin{cor}
    Let be a $\V$-enriched setting for stability in $\VCatgen$. Then $V \maps \C \to \V$ is a Lyapunov morphism if it is positive definite and $V(\flow(t,x)) \leq V(x)$ for all $x \in \C$ for a Lipschitz flow $\flow$ with equilibrium point $x^*$. The existence of a Lyapunov morphism in $\VCatgen$ implies the stability of $x^*$. 
\end{cor}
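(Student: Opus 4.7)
The plan is to deduce the corollary as a direct instance of the categorical Lyapunov theorem (\cref{thm:Lyap}). The key observation is that a $\V$-enriched setting for stability in $\VCatgen$ should genuinely instantiate the axioms S0--S4 of \cref{def:setting}, so once this verification is in hand the corollary becomes immediate.

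First I would verify S0--S4. Axiom S0 is supplied by the proposition just above the corollary, which shows $\VCatgen$ has finite products. Axioms S1--S3 are built into the definition of a $\V$-enriched setting: $\C$ is the space, $T$ the internal monoid, and $\V$ is a posetal object of $\VCatgen$ with distinguished point $0_\V$. For S4, I would invoke \cref{lem:distance}: part (i) gives $d \To 0$, and part (ii) yields the kernel identification $\ker(d) \cong \Delta$, since $0_\V$-valued hom-objects in both directions witness isomorphism in a $\V$-category.

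Next I would match the Lyapunov morphism conditions stated in the corollary to those of \cref{def:lyapunov}. Under the identification $\|\cdot\|_{x^*} = \C(x^*, -)$, the bounding of $V$ between class $\K$ morphisms reduces to \cref{def:positivedef}, and the pointwise inequality $V(\flow(t,x)) \leq V(x)$ is exactly the decrescent lax square interpreted in the partial order on $\VCatgen(T \times \C, \V)$ inherited pointwise from $\V$. With both conditions in place, \cref{thm:Lyap} delivers stability of $x^*$.

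The one subtlety to address carefully---and in my view the main, if minor, obstacle---is the variance of the distance: in the enriched setting $d$ is typed $\C\op \times \C \to \V$, rather than $\C \times \C \to \V$ as demanded by \cref{def:setting}. As the remark following the enriched distance axiom notes, the proofs in \cref{sec:catlyap} only ever use the norm $\C(x^*,-)$ obtained by fixing the first argument to the equilibrium point, so the variance mismatch is cosmetic; one simply precomposes with $(x^*,\id_\C) \maps \C \to \C\op \times \C$ to recover a norm of the form expected by the base theory, and the proof of \cref{thm:Lyap} then applies verbatim.
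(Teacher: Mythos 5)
Your proposal is correct and takes essentially the same route as the paper, which offers no explicit proof beyond the phrase ``\cref{thm:Lyap} therefore yields the following'': the corollary is intended as a direct instantiation of the categorical Lyapunov theorem, with S0 supplied by the products proposition, S4 by \cref{lem:distance}, and the variance mismatch dismissed exactly as you do, via the remark that all proofs in \cref{sec:catlyap} only ever use the norm $\C(x^*,-)$. If anything, your write-up is more careful than the paper's; the one residual wrinkle (which the paper also leaves implicit) is that enriched separability identifies the kernel of $d$ with isomorphic pairs rather than the strict diagonal, so the kernel clause of S4 holds on the nose only when $\C$ is skeletal.
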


\subsection{Set-Based Stability}
\label{sec:powersetenriched}

The advantage of the $\V$-enriched setting for stability is that it supplies an inherent notion of distance via the hom functor. This is obvious in the metric space case where the hom functor is literally a metric. In this section, we demonstrate the point further with an example that produces a non-trivial notion of distance. 

For a set $E \in \Set$, the power set $\P(E)$ is a complete lattice. In line with the previous sections, we take the order to be $\supseteq$, the reverse of the usual order when thinking of $\P(E)$ as a category. In this category, the product is given by union $\cup$, the terminal object is the empty set, internal hom $[U,V]$ is given by set difference $V \setminus U$. 

Indeed, for any $U',U,V \subseteq E$ we have the isomorphism: 
\begin{eqnarray}
\P(E)(U' \cup U , V) & \cong & \P(E)(U',[U,V]) \nonumber\\
(U'\cup U)\supseteq V  
& \leftrightarrow  &  
U'\supseteq (V\setminus U). \nonumber
\end{eqnarray}

Being closed, the category $\P(E)$ is self-enriched, with $\P(E)(U, V) = V \setminus U$. Yet this is not symmetric. We can instead consider the symmetrization $\P(E)^{sym}$:
\begin{itemize}
    \item symmetric hom object: $\P(E)^{sym}(U,V) \coloneqq (U \setminus V) \cup (V \setminus U) \in \P(E)$.
    \item composition morphism: 
    \[
    \underbrace{ (U \setminus V) \cup ( V \setminus U ) }_{\P(E)^{sym}(U,V)}\cup  \underbrace{(V \setminus W) \cup (W \setminus V)}_{\P(E)^{sym}(V,W)} \supseteq \underbrace{(U \setminus W) \cup (W \setminus U)}_{\P(E)^{sym}(U,W)}. 
    \]
    \item identity morphism: $\emptyset \supseteq \P(E)^{sym}(U,U) = (U \setminus U) \cup (U \setminus U) = \emptyset$ trivially.
\end{itemize}  
Consider the $\P(E)$-enriched setting for stability in $\P(E)\mhyphen\Cat$: 
\begin{itemize}
    \item time object $T$
    \item object of interest $\P(E)^{sym}$
    \item measurement object $\P(E)$
\end{itemize}
Here, per Definition \ref{def:setting}, $\T$ is a discrete monoidal $\P(E)$-category. The stable object is simply $\P(E)$ as a self-enriched category, which is a valid stable object as $0_{\P(E)} = \emptyset$ is terminal, and $\P(E)$ is a poset. At last, we note that the distance is given by the (symmetric) hom functor: 
\begin{eqnarray}
\label{eqn:distanceVfunctorPE}
    d \maps \P(E) \times \P(E)  & \to &  \P(E) \\
    (U,V)) & \mapsto &  (U \setminus V) \cup (V \setminus U)  \nonumber
\end{eqnarray}
This can be used to define set-based stability. 

Consider a set $E$ (not necessarily a subset of $\mathbb{R}^n$). Consider a flow on this set $\flow \maps \mathbb{R}_{\geq 0} \times E  \to  E$. For $\P(E)$ the corresponding power set, this induces a flow: 
\begin{eqnarray}
\label{eqn:setflow}
    \flow \maps \mathbb{R}_{\geq 0} \times \P(E) & \to & \P(E) \\
    (t,U) & \mapsto & \flow_t(U) \coloneqq \{\flow_t(x) \mid x \in U\}  \nonumber
\end{eqnarray}
We can study the stability of this system through a modified setting for stability defined by: 
\begin{enumerate}
\setcounter{enumi}{-1}
    \item[\textbf{S0:}] (setting) the category $\Set$, with terminal object $1 = \{*\}$. 
    \item[\textbf{S1:}] (space) The space of interest is the power set $\P(E)$ of a set $E$. 
    \item[\textbf{S2:}] (time) the object $\mathbb{R}_{\geq 0}$ is a monoid with $+$ addition and $0_\oplus = 0$. 
    \item[\textbf{S3:}] (stable object) the power set $\P(E)$. This has a monoidal structure: $(\P(E), \cup, \emptyset)$, wherein: 
    \begin{itemize}
        \item distinguished point: $\emptyset \in \P(E)$. Therefore, the zero morphism, $0$, is given by the constant empty set function $0(U) = \emptyset$. 
        \item posetal structure: $f \To g$, if $f(U) \supseteq g(U)$ for all $U \in \P(E)$. 
    \end{itemize} 
    \item[\textbf{S4:}] (distance) the morphism:
    \begin{eqnarray}
        d \maps \P(E) \times \P(E)  & \to &  \P(E) \\
        (U,V) & \mapsto & (U \setminus V) \cup (V \setminus U )   \nonumber
    \end{eqnarray}
    where $(U \setminus V) \coloneqq \{ u \in U \mid u \notin V \}$. This is simply the (symmetric) hom functor obtained from the enrichment of $\P(E)$ over itself.
\end{enumerate}

\paragraph{\textbf{Norm.}}  The distance yields yields the (categorical) norm: $\|\cdot\|_{\{x^*\}} \maps \P(E) \to R$ defined to be:
\begin{align*}
    \|U\|_{\{x^*\}} 
    & \coloneqq (U \setminus \{x^*\}) \cup (\{x^*\} \setminus U ) 
    \\&= \{ u \in U \mid u \neq x^* \} \cup \{ x^* \mid x^* \notin U \}  \nonumber
    \\&= \left\{ 
    \begin{array}{ccl}
        U \setminus \{x^*\}  & \mathrm{if} & x^* \in U \\
        \{x^*\} & \mathrm{if} & x^* \notin U 
    \end{array}
    \right.
\end{align*}

It naturally satisfies: 
\begin{itemize}
    \item $ \|\cdot\|_{\{x^*\}} \To 0$ since $\| U \| \supseteq \emptyset$,
    \item  $\ker(\|\cdot\|_{\{x^*\}}) = \{*\}$ since $\| U \|_{\{x^*\}} = \emptyset$ iff  $U = \{x^*\}$
\end{itemize}  
        
\paragraph{\textbf{Class $\K$ Morphisms.}}  A class $K$ morphism $\alpha: \P(E) \to \P(e)$ satisfies: 
\begin{itemize}
    \item $\alpha$ is strictly increasing on $\P(E)$: $U_1 \subset U_2$ implies that $\alpha(U_1) \subset \alpha(U_2)$, 
    \item $\alpha$ maps $\emptyset \in \P(E)$ to $\emptyset$: $\alpha(\emptyset) = \emptyset$. 
\end{itemize}
A map $V \maps \P(E) \to \P(E)$ is:
\begin{itemize}
    \item \define{positive semi-definite} if $V(U) \supseteq \emptyset$, 
    \item \define{positive definite} if in addition $V(U) = \emptyset$ iff $U = \{x^*\}$. 
\end{itemize}
The map $V$ is \textbf{class $\K$ bounded} if there exists class $\K$ maps $\underline{\alpha}, \overline{\alpha}$ such that:  
\[
\underline{\alpha}(\| x \|_{\{x^*\}} ) \subseteq V(U) \subseteq 
\overline{\alpha}( \| x \|_{\{x^*\}} )
\]
wherein if $x^* \notin U$, $V(U) \cong \{x^*\}$ and if $x^* \in U$: 
\[
    \underline{\alpha}(U \setminus \{x^*\})
    \subseteq V(U) \subseteq 
    \overline{\alpha}(U \setminus \{x^*\})
\]

\paragraph{\textbf{Flows.}} Flows are defined as in \eqref{eqn:setflow}. 

\paragraph{\textbf{Stability.}}  An \textbf{equilibrium point} satisfies $\flow_t(\{x^*\}) = \{x^*\}$. This equilibrium point is stable if: 
\[
    \| \flow_t(U) \|_{\{x^*\}} \subseteq \alpha(\| U \|_{\{x^*\}})
\]
for a class $\K$ map $\alpha$. This can be interpreted classically as follows: 
\begin{lem}
     If $\{x^*\}$ is stable, then for all $V \subset E$ with $x^* \in V$, there exists a $U \subset E$ with $x^* \in U$ such that: 
     \[
    \flow_t(U) \subseteq V, \quad \forall t \in \Rplus
     \]
     If $E \subset \mathbb{R}^n$ then for all $\epsilon > 0$ there exists a $\delta > 0$ such that: $\flow_t(B_{\delta}(x^*)) \subseteq B_{\epsilon}(x^*)$. 
\end{lem}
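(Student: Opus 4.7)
The plan is to unpack the set-based stability hypothesis and invert the class $\K$ morphism $\alpha \maps \P(E) \to \P(E)$ supplied by \cref{def:stable}, then use the equilibrium property of $x^*$ to rewrite the norm $\|\cdot\|_{\{x^*\}}$ cleanly. Recall that $\alpha^{-1}$ is again class $\K$: order-preserving on $\P(E)$ (in the $\supseteq$-order) and sending $\emptyset$ to $\emptyset$. The stability inequality asserts, for every $t \in \Rplus$ and every $W \in \P(E)$,
\[
\|\flow_t(W)\|_{\{x^*\}} \;\subseteq\; \alpha(\|W\|_{\{x^*\}}).
\]

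For the first assertion, given $V \subseteq E$ with $x^* \in V$, I would set
\[
U \coloneq \alpha^{-1}(V \setminus \{x^*\}) \cup \{x^*\}.
\]
By construction $x^* \in U$. Because $x^*$ is an equilibrium point, $x^* = \flow_t(x^*) \in \flow_t(U)$, so $\|\flow_t(U)\|_{\{x^*\}} = \flow_t(U) \setminus \{x^*\}$, and similarly $\|U\|_{\{x^*\}} = U \setminus \{x^*\} \subseteq \alpha^{-1}(V \setminus \{x^*\})$. Applying stability and using that $\alpha$ preserves $\subseteq$ gives
\[
\flow_t(U) \setminus \{x^*\} \;\subseteq\; \alpha(\|U\|_{\{x^*\}}) \;\subseteq\; \alpha\bigl(\alpha^{-1}(V \setminus \{x^*\})\bigr) \;=\; V \setminus \{x^*\},
\]
which combined with $x^* \in V$ yields $\flow_t(U) \subseteq V$ for every $t \in \Rplus$.

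For the Euclidean specialization, take $V = B_\epsilon(x^*)$ and apply the preceding construction to obtain a set $U \ni x^*$ with $\flow_t(U) \subseteq B_\epsilon(x^*)$. The remaining step is to produce $\delta > 0$ with $B_\delta(x^*) \subseteq U$, whence $\flow_t(B_\delta(x^*)) = \{\flow_t(x) \mid x \in B_\delta(x^*)\} \subseteq \flow_t(U) \subseteq B_\epsilon(x^*)$ closes the argument. Producing this $\delta$ is the main obstacle: it amounts to a continuity-at-$\emptyset$ property of $\alpha^{-1}$ relative to the Euclidean topology on $E$, requiring that $\alpha^{-1}(B_\epsilon(x^*) \setminus \{x^*\})$ contain a punctured Euclidean neighborhood of $x^*$. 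This is not automatic from the abstract class $\K$ axioms in $\P(E)$, so in a full write-up I would either impose a compatibility of the given $\alpha$ with the Euclidean structure (for instance, that $\alpha$ is induced pointwise by a classical class $\K$ function on $\Rplus$ acting on distances to $x^*$), or argue this compatibility separately for the concrete flows of interest, after which the required $\delta$ follows by order-preservation of $\alpha^{-1}$ applied to a shrinking family of punctured balls.
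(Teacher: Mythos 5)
Your argument for the first assertion is correct and follows essentially the same route as the paper: choose $U$ as a preimage of $V$ under $\alpha$, apply the stability inequality, and use order-preservation of $\alpha$ and $\alpha^{-1}$. Your choice $U = \alpha^{-1}(V\setminus\{x^*\})\cup\{x^*\}$, together with the explicit use of the equilibrium property to identify $\|\flow_t(U)\|_{\{x^*\}}$ with $\flow_t(U)\setminus\{x^*\}$, is in fact tidier than the paper's version, which takes $U=\alpha^{-1}(V)$ and passes through the somewhat loose step $\alpha(U\setminus\{x^*\})\subseteq V\setminus\{\alpha(x^*)\}$ (where $\alpha$ is being applied to a point rather than a set). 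As for the Euclidean specialization, the obstacle you identify is genuine, and you should know that the paper does not resolve it either: its proof simply says to ``pick $\delta>0$ such that $B_\delta(x^*)\subseteq U$'' with $U=\alpha^{-1}(B_\epsilon(x^*))$, which presupposes exactly the compatibility of $\alpha^{-1}$ with the Euclidean topology on $E$ that you flag---an abstract class $\K$ morphism of $\P(E)$ need not send a ball to a set containing a ball. So your proposal matches the paper on the first claim and is more candid than the paper on the second; the additional hypothesis you suggest (e.g.\ that $\alpha$ is induced by a classical class $\K$ function acting on distances to $x^*$) is the kind of assumption one would need to make that step airtight.
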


\begin{proof}
Given $V \subset E$, pick
$U = \alpha^{-1}(V)$. Then: 
\begin{align*}
\flow_t(U)
= 
\| \flow_t(U) \|_{\{x^*\}} \cup \{x^*\}
& \subseteq \alpha(\| U \|_{\{x^*\}}) \cup \{x^*\}
= 
\alpha(U \setminus \{x^*\}) \cup \{x^*\} \nonumber\\
& \subseteq V  \setminus \{\alpha(x^*)\} \cup \{x^*\} \subseteq V 
\end{align*}
In the case when  $E \subset \mathbb{R}^n$, given $\epsilon > 0$, let $V = B_{\epsilon}(x^*)$ and $U = \alpha^{-1}(B_{\epsilon}(x^*))$ and pick $\delta > 0$ such that $B_{\delta}(x^*) \subseteq U$. The result follows from the fact that $\alpha(B_{\delta}(x^*)) \subseteq \alpha(U)$. 
\end{proof}

\paragraph{\textbf{Lyapunov's Theorem.}}  A Lyapunov morphism in this case satisfies: 
\begin{enumerate}
    \item $\underline{\alpha}(\| x \|_{\{x^*\}} ) \subseteq V(U) \subseteq 
\overline{\alpha}( \| x \|_{\{x^*\}} )$
    \item $V(\flow_t(U)) \subseteq V(U)$. 
\end{enumerate}
Theorem \ref{thm:Lyap} implies that, in that case, $x^*$ is a stable equilibrium point. 

\begin{rmk}
    The set-based version of Lyapunov's  theorem was proven categorically in \cite{MattenetJungers}, where necessary and sufficient conditions were given. In that work, condition (1) is represented by requiring $V$ be a \emph{rough approximation} of $\|x\|_{x^*}$, and condition (2) is represented by requiring $V$ to be \emph{monovariant}. 
\end{rmk}

\subsection*{Acknowledgments}

This research was supported by the Air Force Office of Scientific Research under the Multidisciplinary University Research Initiative grant Hybrid Dynamics ‐ Deconstruction and Aggregation (HyDDRA). First and foremost, we would like to thank  S\'ebastien Mattenet for his collaboration in formulating the converse Lyapunov conditions in Theorem \ref{thm:Lyapconv}, building upon his previous work \cite{MattenetJungers}. This collaboration lead to the follow-on paper extending this work to systems \cite{CLT2}. We would also like to thank David Spivak for the numerous discussions on the categorical formulation of Lyapunov theory, and specifically the framing of Proposition \ref{prop:lyapunov}. The first author would like to thank Mariusz Wodzicki for the introduction to Homotopical Algebra. Finally, we would like to thank John Baez and Todd Trimble for the many discussions on category theory.

\bibliographystyle{plain}
\bibliography{references}

\end{document}